\numberwithin{equation}{section}
\numberwithin{figure}{section}
\numberwithin{table}{section}
\theoremstyle{plain}
\newtheorem{thm}{\protect\theoremname}[section]
\theoremstyle{plain}
\newtheorem{conjecture}[thm]{\protect\conjecturename}
\theoremstyle{plain}
\newtheorem{prop}[thm]{\protect\propositionname}
\theoremstyle{definition}
\newtheorem{defn}[thm]{\protect\definitionname}
\theoremstyle{definition}
\newtheorem{example}[thm]{\protect\examplename}
\theoremstyle{plain}
\newtheorem{lem}[thm]{\protect\lemmaname}
\theoremstyle{remark}
\newtheorem{notation}[thm]{\protect\notationname}
\theoremstyle{plain}
\newtheorem{cor}[thm]{\protect\corollaryname}
\theoremstyle{remark}
\newtheorem{rem}[thm]{\protect\remarkname}
\subjclass[2020]{Primary 05B35; Secondary 52B40, 52C35, 14N20}
\providecommand{\conjecturename}{Conjecture}
\providecommand{\corollaryname}{Corollary}
\providecommand{\definitionname}{Definition}
\providecommand{\examplename}{Example}
\providecommand{\lemmaname}{Lemma}
\providecommand{\notationname}{Notation}
\providecommand{\propositionname}{Proposition}
\providecommand{\remarkname}{Remark}
\providecommand{\theoremname}{Theorem}
\begin{document}
\title[The Sticky Matroid Conjecture]{The Sticky Matroid Conjecture}
\author{Jaeho Shin}
\address{Department of Mathematical Sciences, Seoul National University, Gwanak-ro
1, Gwanak-gu, Seoul 08826, South Korea}
\email{j.shin@snu.ac.kr}
\keywords{Kantor's conjecture, sticky matroid conjecture}
\begin{abstract}
We show Kantor's conjecture (1974) holds in rank $4$. This proves
both the sticky matroid conjecture of Poljak and Turzík (1982) and
the whole Kantor's conjecture, due to an argument of Bachem, Kern,
and Bonin, and an equivalence argument of Hochstättler and Wilhelmi,
respectively.
\end{abstract}

\maketitle

\section{Introduction}

We assume our matroids are finite throughout the paper. A matroid
is \textbf{modular} if every pair of flats of the matroid is modular,
and \textbf{hypermodular} if every pair of maximal proper flats is
modular. Modularity and hypermodularity of a matroid both are equal
in rank $3$, but different in higher ranks. Kantor made a conjecture
about the modular extensions of hypermodular matroids \cite{Kantor},
and Bonin later rephrased it \cite{Bonin}.
\begin{conjecture}[Kantor's Conjecture 1974]
Every matroid that is hypermodular is strongly embeddable in a modular
matroid.
\end{conjecture}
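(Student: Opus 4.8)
The plan is to prove Kantor's conjecture in rank $4$: in ranks at most $3$ hypermodularity coincides with modularity, while the passage from rank $4$ to arbitrary rank, and likewise to the sticky matroid conjecture, is supplied by the cited reductions of Bachem--Kern--Bonin and Hochst\"attler--Wilhelmi. So let $M$ be hypermodular of rank $4$. Since $\mathrm{si}(M)$ is hypermodular exactly when $M$ is and strongly embeddable in a modular matroid exactly when $M$ is, I may assume $M$ is simple; and if $M$ is disconnected, then each component is hypermodular of rank at most $3$, hence modular, and a direct sum of modular matroids is modular, so $M$ itself is modular. Thus it remains to treat a connected simple hypermodular matroid $M$ of rank exactly $4$, and the goal is to produce a prime power $q$ and a strong embedding $M \hookrightarrow \mathrm{PG}(3,q)$; this finishes the proof, since $\mathrm{PG}(3,q)$ is modular. (This is essentially the only possibility: a connected modular matroid of rank $\ge 4$ is a finite projective geometry by Veblen--Young and Wedderburn, and a strong embedding into one factors through the rank-$4$ flat spanned by the image --- but only the construction will be needed.)

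Next I would extract the local projective structure. For each point $p$ of $M$ the contraction $M/p$ has rank $3$, its rank-$2$ flats are the planes of $M$ through $p$, and any two such planes meet --- by hypermodularity --- in a line necessarily through $p$; hence $M/p$ is a modular rank-$3$ matroid, i.e.\ a possibly degenerate projective plane. The decisive ingredient is then a three-dimensional Desargues argument: fixing a plane $H$ of $M$ that is a proper projective plane, and a point $p \notin H$ (available since $r(M)=4>r(H)$), two triangles of $H$ perspective from a point of $H$ give rise to three planes joining $p$ to the pairs of opposite sides; these meet pairwise in lines by hypermodularity, and projecting from $p$ reproduces the classical proof that the perspectivity axis closes up inside $H$. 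So every proper projective plane occurring as a flat of $M$ is Desarguesian, hence coordinatizable over a common division ring, which by Wedderburn's theorem is a field $\mathrm{GF}(q)$; and since any two such planes share a line --- hence agree on its number $q+1$ of points --- the field is the same throughout. Fixing a coordinate frame, a von~Staudt-type coordinatization then places the points of $M$ into $\mathrm{PG}(3,q)$ so as to realize first the rank-$3$ flats and, by the lattice structure, all flats of $M$.

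What remains is to carry out and control this coordinatization globally --- over the whole lattice of flats at once, using only the local hypothesis that pairs of hyperplanes are modular. When $M$ has no proper projective plane among its flats the field is not pinned down and there is freedom to embed into $\mathrm{PG}(3,q)$ for $q$ large, placing points one at a time; but already here the position of a point lying on several already-determined lines is forced, and one must check that it clashes neither with an earlier point nor with any required non-incidence. The step I expect to be the main obstacle is the mixed regime, in which $M$ simultaneously carries full $\mathrm{PG}(2,q)$ planes --- rigidly fixing the field --- and sparser planes through which incidences must still propagate consistently: one must show that the forced partial coordinatization on the rigid part extends over all of $M$, i.e.\ that hypermodularity alone prevents $M$ from being over-determined in one region and, via an incompatible constraint carried through the connected lattice of flats, contradictory in another. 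This global-coherence argument, with its attendant case analysis of how incidences travel from plane to plane along their common lines, is where the bulk of the work lies.

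Once $M$ has been exhibited inside $\mathrm{PG}(3,q)$ with its flats exactly the traces on $E(M)$ of flats of $\mathrm{PG}(3,q)$, ranks and meets preserved, this realization is a strong embedding into a modular matroid, which proves Kantor's conjecture in rank $4$; combined with the cited reductions, this yields the sticky matroid conjecture and Kantor's conjecture in full.
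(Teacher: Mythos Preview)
Your approach is genuinely different from the paper's and aims at something stronger---a direct embedding into $\mathrm{PG}(3,q)$ via coordinatization---but it has a real gap precisely where you flag it. The paper avoids coordinatization entirely. Its key lemma is that a rank-$4$ hypermodular matroid contains no V\'amos line arrangement; from this it follows that every modular cut generated by a non-modular pair is \emph{proper}, so Crapo's single-element extension by such a cut is again rank-$4$, loopless, hypermodular, and of strictly smaller modular defect. Iterating terminates in a modular extension of the same rank. No field, no Desargues, no gluing of local coordinates.

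Your route, by contrast, stakes everything on the ``global coherence'' step you yourself describe as ``where the bulk of the work lies'', and that step is not carried out. Showing that local Desarguesian coordinates on individual plane flats patch to a single embedding $M\hookrightarrow\mathrm{PG}(3,q)$ is exactly the crux, and nothing in the proposal indicates how the obstructions are controlled; absent that, this is an outline of a hoped-for argument rather than a proof. There is also a softer issue earlier: the three-dimensional Desargues lift you sketch requires auxiliary points and intersections (to lift one triangle out of $H$ and then project back) whose existence is not guaranteed merely by the hypothesis that pairs of hyperplanes meet---in a sparse hypermodular matroid the needed incidences may simply be absent. So even the claim that every proper projective-plane flat of $M$ is Desarguesian is not yet secured by what you wrote. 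It is true \emph{a posteriori}, since the paper's modular extension of a connected simple $M$ must be a projective geometry, but invoking that here would be circular.
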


\begin{conjecture}[Hypermodular Matroid Conjecture 2011]
Every hypermodular matroid is a restriction of a modular matroid
of the same rank.
\end{conjecture}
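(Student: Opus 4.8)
The plan is to prove the rank-$4$ case, to which the full statement reduces by the arguments recalled in the introduction (Bonin's reformulation, the equivalence of Hochst\"attler and Wilhelmi, and --- for the sticky corollary --- that of Bachem, Kern and Bonin). So let $M$ be hypermodular of rank $4$. First I would clear away the trivialities: loops and parallel elements are irrelevant, a direct sum of modular matroids is modular and a matroid is hypermodular exactly when all its connected components are, and a hypermodular matroid of rank at most $3$ is already modular; so one may take $M$ simple and connected. By the structure theory of modular geometric lattices together with the Veblen--Young and Wedderburn theorems, a connected simple modular matroid of rank $4$ is a projective geometry $PG(3,q)$, and a connected rank-$4$ matroid can appear as a restriction of a rank-$4$ modular matroid only inside a single connected component of the latter, which then has rank $4$ and so is such a $PG(3,q)$. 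Hence the goal becomes: every simple connected hypermodular matroid of rank $4$ embeds into some $PG(3,q)$, equivalently is representable over a finite field.

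Next I would read off the local geometry. For each point $p$ the contraction $M/p$ has rank $3$ and is again hypermodular --- two rank-$3$ flats of $M$ through $p$ form a modular pair, hence meet along a line through $p$, which is precisely what makes the two corresponding lines of $M/p$ a modular pair --- so every point-residue $M/p$ is a modular rank-$3$ matroid, i.e.\ a (possibly degenerate) projective plane; counting the planes of $M$ through a fixed line shows that, degenerate configurations aside, these planes all have one common order $q$. In the same spirit one analyses the rank-$3$ flats of $M$ and their pairwise intersections; the upshot is a picture of $M$ as a ``partial projective $3$-space'', a rank-$4$ point--line--plane geometry obeying all the Veblen--Young incidence axioms except that some lines may have only two points and some planes may fail to be projective planes (two coplanar lines whose intersection point is absent).

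The main body is then a \emph{completion} of $M$: build a finite chain of single-element extensions $M = M_0 \subset M_1 \subset \dots \subset M_N$, each hypermodular of rank $4$ --- the rank does not rise because the modular cut used is nonempty, and that cut is prescribed so that the new intersection points respect the rule that any two rank-$3$ flats meet in a line --- adding the missing third points to two-point lines and the missing points of incomplete planes, until $M_N$ obeys all the Veblen--Young axioms. Being of rank $4$, $M_N$ is then a $3$-dimensional projective space, so $M_N = PG(3,q)$ by Veblen--Young and Wedderburn, and $M = M_N|_{E(M)}$ is the sought restriction. Equivalently, the target is to make every residue $M/p$ equal to the \emph{Desarguesian} plane $PG(2,q)$, uniformly in $p$, after which $M$ is ``locally $PG(3,q)$'' and an embedding theorem in the style of Kantor concludes; a useful device toward this is to import a candidate Desargues configuration into the three-dimensional structure of $M$ and run the classical ``lift it out of the plane'' argument, hypermodularity furnishing the auxiliary joins and meets it requires.

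The hard part --- essentially the entire content of the rank-$4$ case --- is the completion step: proving that whenever $M_i$ is not yet a projective geometry a hypermodularity-preserving single-element extension of the needed type really exists, and that the process terminates. This is exactly where rank $4$ beats rank $3$: in rank $3$ a hypermodular matroid need not be Desarguesian and admits no such completion (non-Desarguesian projective planes are themselves hypermodular), whereas in rank $4$ the extra dimension ought to force the residues to be Desarguesian and the deficient planes to admit their missing points. Turning ``extra dimension'' into an effective recipe for choosing the modular cuts --- and bounding how many points must be added --- is the crux, and I expect it to require a careful case analysis of how short lines, non-projective planes, and non-Desarguesian residues can coexist inside a hypermodular rank-$4$ matroid, together with a delicate verification that each such configuration can be repaired without destroying hypermodularity.
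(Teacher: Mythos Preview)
Your proposal correctly identifies the overall architecture --- reduce to rank $4$, then repair a non-modular hypermodular $M$ by a finite chain of single-element extensions that preserve hypermodularity until a modular matroid is reached --- and this is indeed the paper's strategy. But you explicitly stop short of the actual content: you call the completion step ``the hard part --- essentially the entire content of the rank-$4$ case'' and say you ``expect it to require a careful case analysis'', without supplying one. That is the gap; what you have written is a research plan, not a proof.

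The paper fills this gap with a mechanism you do not mention. The obstruction to a non-modular pair $\{F,L\}$ generating a \emph{proper} modular cut (one not containing $\overline{\emptyset}$) is precisely the presence of a V\'amos line arrangement among the rank-$2$ flats of that cut (Lemma~\ref{lem:SEE-0}). The central geometric lemma (Lemma~\ref{lem:Vamos}) then shows, by building a ``tetrahedron'' of four planes around a hypothetical V\'amos quadruple and contracting at a generic point to a rank-$3$ hypermodular residue, that a rank-$4$ hypermodular matroid can contain \emph{no} V\'amos line arrangement at all. Consequently every non-modular pair generates a non-principal modular cut; the corresponding extension is again loopless and hypermodular and strictly decreases the total modular defect (Lemma~\ref{lem:SEE-1}), so the process terminates. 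There is no case analysis of short lines or deficient planes, no Veblen--Young/Wedderburn identification of the target, and no separate Desargues verification for the residues $M/p$: a single integer potential (modular defect) plus the V\'amos obstruction do all the work. Your alternative route --- make every residue $M/p$ equal to $PG(2,q)$ and then invoke ``an embedding theorem in the style of Kantor'' --- is also uncomfortably close to circular, since Kantor's conjecture is exactly the statement being proved.
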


A matroid $M$ is called \textbf{sticky} if for every two matroids
$M_{1}$ and $M_{2}$ with $M=M_{1}|_{E(M)}=M_{2}|_{E(M)}$ there
is a matroid $\tilde{M}$ with $M_{1}=\tilde{M}|_{E(M_{1})}$ and
$M_{2}=\tilde{M}|_{E(M_{2})}$ where $E(N)$ denotes the ground set
of $N$. In 1982, Poljak and Turzík \cite{Sticky} showed that every
modular matroid is a sticky matroid and conjectured the converse holds.
\begin{conjecture}[Sticky Matroid Conjecture 1982]
Stickiness of a matroid implies modularity. Thus, stickiness is equivalent
to modularity.
\end{conjecture}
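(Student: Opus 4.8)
The plan is to obtain the Sticky Matroid Conjecture from Kantor's Conjecture, reducing the latter to its rank-$4$ instance, where essentially all of the difficulty is concentrated.

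First I would assemble the two known reductions linking the two conjectures. On the matroid side, the argument of Bachem, Kern, and Bonin shows that \emph{stickiness forces hypermodularity}: if a matroid $M$ of rank at least $3$ has two distinct maximal proper flats $H_{1},H_{2}$ that are not a modular pair, i.e. $r(H_{1})+r(H_{2})>r(M)+r(H_{1}\cap H_{2})$, then $M$ has two single-element extensions --- one adjoining a point forced to lie on $H_{1}$, the other a point forced to lie on $H_{2}$ --- that admit no common extension, because reconciling the two new points inside a single matroid would force dependencies violating submodularity of the rank function of $M$. Hence every sticky matroid is hypermodular. On the other side, the equivalence of Hochst\"attler and Wilhelmi shows that Kantor's Conjecture in rank $4$ implies it in every rank (the ranks $\le 3$ being classical, since there modularity and hypermodularity coincide).

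Granting these, once Kantor's Conjecture is proved in rank $4$ it holds in all ranks, so any sticky matroid $M$, being hypermodular, is strongly embeddable in --- equivalently, by Bonin's reformulation, a restriction of --- a modular matroid $N$ of the same rank. It then remains to upgrade this to modularity of $M$ itself, which follows from a further use of the stickiness hypothesis: were $M$ non-modular, a witnessing single-element extension of $M$ could not be amalgamated with $N$ (itself sticky, as a modular matroid), contradicting stickiness of $M$; so $M$ is modular. Combined with the converse of Poljak and Turz\'ak --- every modular matroid is sticky --- this yields the asserted equivalence of stickiness and modularity.

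The genuine obstacle is Kantor's Conjecture in rank $4$: given a hypermodular matroid $M$ of rank $4$, one must construct an explicit rank-$4$ modular matroid --- a combinatorial projective $3$-space $\mathrm{PG}(3,q)$, or a degenerate version thereof obtained as a direct sum of projective geometries of smaller rank --- into which $M$ strongly embeds. The hard part will be a delicate analysis of the lattice of flats of $M$: using hypermodularity at the level of planes (rank-$3$ flats) to control how the lines of $M$ intersect and to identify the pencils and ``directions'' that have to be filled in, and then adjoining the missing points, lines, and planes in a mutually consistent fashion so that the result is genuinely modular and restricts back to $M$. This construction, together with the case analysis it forces, is the technical heart of the argument and the step I would budget by far the most effort for.
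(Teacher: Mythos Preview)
Your reduction scheme mis-states what the cited results provide. The Bachem--Kern--Bonin argument, as invoked here, shows directly that Kantor's conjecture in rank $4$ implies the Sticky Matroid Conjecture; it is not merely the assertion that stickiness forces hypermodularity. Conversely, Hochst\"attler--Wilhelmi prove that the Sticky Matroid Conjecture and the \emph{full} Kantor conjecture are equivalent; they do not supply a self-contained rank reduction ``Kantor in rank $4$ $\Rightarrow$ Kantor in all ranks''. The paper's logic is therefore: Kantor rank $4$ $\Rightarrow$ (BKB) SMC $\Rightarrow$ (HW) full Kantor, and your detour through ``sticky $\Rightarrow$ hypermodular $\Rightarrow$ embeds in a modular $N$'' is both circuitous and incomplete. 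It forces the additional step of ``upgrading to modularity of $M$ itself'', and that step is a genuine gap: your claim that a single-element extension of $M$ witnessing a non-modular pair $\{F,L\}$ cannot be amalgamated with $N$ over $M$ is unsubstantiated --- the modular matroid $N$ already contains a point in $\overline{F}_N\cap\overline{L}_N$, and there is no evident obstruction to amalgamating by placing the new element parallel to it.

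On the rank-$4$ Kantor step, which you correctly identify as the heart of the matter, your plan is too vague to act on. The paper's method is concrete: iterate Crapo single-element extensions $M\mapsto M+_{\mathcal{M}}e$ along non-principal modular cuts $\mathcal{M}$ generated by non-modular pairs, proving that each such extension preserves rank-$4$ hypermodularity and strictly decreases the total modular defect, so that the process terminates at a modular matroid of the same rank. The crux is that every such $\mathcal{M}$ is \emph{proper} (contains no flat of rank $\le 1$), which is shown to be equivalent to the absence of a V\'amos line arrangement inside $\mathcal{M}$; the key geometric lemma then establishes that rank-$4$ hypermodularity forbids V\'amos configurations altogether, via a ``tetrahedron'' argument carried out in a suitably chosen contraction plane. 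None of this machinery --- modular cuts, the modular-defect descent, or the V\'amos obstruction --- appears in your sketch.
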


If Kantor's conjecture holds in rank $4$, the sticky matroid conjecture
holds due to an argument of Bachem, Kern, and Bonin, \cite{BK88,Bonin}.
Recently, Hochstättler and Wilhelmi \cite{HW19} showed the sticky
matroid conjecture and Kantor's conjecture both are equivalent. In
this paper, we prove Kantor's conjecture in rank $4$ and hence both
the sticky matroid conjecture and the whole Kantor's conjecture.

\subsection*{Notations}

For a matroid $M$, unless otherwise stated, $r$ denotes its rank
function while $r(M)$ denotes its rank. For a positive integer $n$,
we denote $\left[n\right]:=\left\{ 1,\dots,n\right\} $. We denote
by $A\sqcup B$ the union of two disjoint sets $A$ and $B$.

\subsection*{Acknowledgements}

The author is deeply indebted to Thomas Zaslavsky for his invaluable
advice and carefully reading the manuscript. He is also grateful to
Joseph Bonin and Michael Wilhelmi for helpful conversations. Special
thanks to JongHae Keum for the hospitality during his visit to Korea
Institute for Advanced Study.

\section{\label{sec:Prelim}Preliminaries}

The following proposition describes the collection $\mathcal{L}(M)$
of flats of a matroid $M$ which is also called the \textbf{(geometric)
lattice} of $M$.
\begin{prop}
A subcollection $\mathcal{A}$ of the power set of a (finite) set
$S$ with $S\in\mathcal{A}$ is the lattice of a matroid on $S$ if
and only if it satisfies the following axioms.
\begin{enumerate}[label=(F\arabic*),itemsep=1pt]
\item \label{enu:(F1)}For $\ensuremath{F,L\in\mathcal{A}}$, one has $F\cap L\in\mathcal{A}$.
\item \label{enu:(F2)}For $L\in\mathcal{A}$ and $s\in S-L$, the smallest
member $F$ of $\mathcal{A}$ containing $L\cup\left\{ s\right\} $
covers $L$, that is, there is no member of $\mathcal{A}$ between
$F$ and $L$.
\end{enumerate}
The condition \ref{enu:(F2)} can be replaced with
\begin{enumerate}[label=(F2$\,'$),itemsep=1pt]
\item \label{enu:(F2')}If $L\in\mathcal{A}$ and $A_{1},\dots,A_{n}$
are all minimal members of $\mathcal{A}$ that properly contain $L$,
then $A_{1}-L,\dots,A_{n}-L$ partition $S-L$.
\end{enumerate}
\end{prop}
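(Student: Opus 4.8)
The plan is to prove the main biconditional by identifying the members of $\mathcal{A}$ with the closed sets of a matroid closure operator, and then to obtain the equivalence of (F2) and (F2$'$) from (F1) alone. For the ``only if'' direction, suppose $\mathcal{A}=\mathcal{L}(M)$ for a matroid $M$. An intersection of flats is a flat, so (F1) holds. For (F2), take a flat $L$ and $s\in S-L$; then $F:=\mathrm{cl}_{M}(L\cup\{s\})$ is a flat with $r(L)<r(F)\le r(L)+1$, the strict inequality because $s\notin L=\mathrm{cl}_{M}(L)$ and the other because $r(L\cup\{s\})\le r(L)+1$. Since a flat properly containing another has strictly larger rank, no flat lies strictly between $L$ and $F$; that is, $F$ covers $L$.

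For the ``if'' direction, assume (F1) and (F2). Because $S\in\mathcal{A}$ and $\mathcal{A}$ is closed under intersection, for every $X\subseteq S$ the set $\mathrm{cl}(X):=\bigcap\{F\in\mathcal{A}:X\subseteq F\}$ is the least member of $\mathcal{A}$ containing $X$; hence, using (F1) once more, $\mathcal{A}=\{X\subseteq S:\mathrm{cl}(X)=X\}$. The operator $\mathrm{cl}$ is trivially extensive, monotone, and idempotent, so the only matroid closure axiom that needs an argument is exchange. To verify it, suppose $y\in\mathrm{cl}(X\cup\{x\})-\mathrm{cl}(X)$ and put $L:=\mathrm{cl}(X)$. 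Then $x\notin L$ (otherwise $\mathrm{cl}(X\cup\{x\})=L\ni y$), so by (F2) the member $F:=\mathrm{cl}(L\cup\{x\})=\mathrm{cl}(X\cup\{x\})$ covers $L$; and since $y\notin L$, the member $G:=\mathrm{cl}(L\cup\{y\})=\mathrm{cl}(X\cup\{y\})$ also covers $L$. From $y\in F$ we get $L\cup\{y\}\subseteq F$, hence $G\subseteq F$; as $G$ properly contains $L$ and $F$ covers $L$, this forces $G=F$, so $x\in F=G=\mathrm{cl}(X\cup\{y\})$, which is exactly the exchange property. Thus $\mathrm{cl}$ is a matroid closure operator; it defines a matroid $M$ on $S$, and $\mathcal{L}(M)=\{X:\mathrm{cl}(X)=X\}=\mathcal{A}$.

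Finally, assuming (F1), I would prove (F2) $\Leftrightarrow$ (F2$'$). Fix $L\in\mathcal{A}$; since $\mathcal{A}$ is finite, the minimal members $A_{1},\dots,A_{n}$ of $\mathcal{A}$ properly containing $L$ are well defined, and a member of $\mathcal{A}$ covers $L$ if and only if it is one of the $A_{i}$. Assume (F2). For each $s\in S-L$ the least member of $\mathcal{A}$ containing $L\cup\{s\}$ covers $L$, hence equals some $A_{i}$, which gives $s\in A_{i}-L$; thus $\bigcup_{i}(A_{i}-L)=S-L$. If $s$ lay in $(A_{i}-L)\cap(A_{j}-L)$, then the least member of $\mathcal{A}$ containing $L\cup\{s\}$ properly contains $L$ and is contained in $A_{i}\cap A_{j}\in\mathcal{A}$, so it equals both $A_{i}$ and $A_{j}$ by their minimality, forcing $i=j$. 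Hence the sets $A_{i}-L$ partition $S-L$, i.e.\ (F2$'$) holds. Conversely, assume (F2$'$) and let $s\in S-L$. There is a unique $j$ with $s\in A_{j}-L$; since $A_{j}$ is minimal over $L$ it covers $L$, and the least member $F$ of $\mathcal{A}$ containing $L\cup\{s\}$ is contained in $A_{j}$ and properly contains $L$, whence $F=A_{j}$ covers $L$, which is (F2).

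The step I expect to be the real obstacle is the verification of exchange in the second paragraph: that is the one place where the two instances of (F2)---applied to $L\cup\{x\}$ and to $L\cup\{y\}$---must be linked through a single covering relation. Everything else reduces to bookkeeping with (F1) and the finiteness of $\mathcal{A}$.
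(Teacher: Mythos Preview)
Your proof is correct. The paper, however, does not supply a proof of this proposition at all: it is stated in the Preliminaries as a standard cryptomorphism for matroids (essentially the flat axioms as found in Oxley's textbook) and is used without further justification. So there is nothing to compare against, and your argument via the closure operator $\mathrm{cl}(X)=\bigcap\{F\in\mathcal{A}:X\subseteq F\}$, followed by the verification of the MacLane--Steinitz exchange property using two applications of (F2), is exactly the expected textbook route. The equivalence (F2)$\Leftrightarrow$(F2$'$) under (F1) is also handled correctly; your observation that the minimal members $A_i$ over $L$ are precisely the covers of $L$ makes both implications immediate.
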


\begin{defn}
Let $M$ be a matroid on $E(M)=S$. For a pair $\left\{ X,Y\right\} $
of subsets of $S$, we always have $r(X)+r(Y)-r(X\cup Y)-r(X\cap Y)\ge0$,
called the submodularity of rank function $r$. The nonnegative integer
on the left is called the \textbf{modular defect }of $\left\{ X,Y\right\} $,
and $\left\{ X,Y\right\} $ is called a \textbf{modular pair} if its
modular defect is $0$. Unless otherwise specified, a (non-)modular
pair means a (non-)modular pair of flats. The \textbf{modular defect}
of $M$ is the sum of modular defects of all pairs of flats.
\end{defn}

\begin{example}
For every flat $F$, both $\left\{ F,S\right\} $ and $\{F,\overline{\emptyset}\}$
are modular pairs where $\overline{\emptyset}=\overline{\emptyset}_{M}$
denotes the set of all rank-$0$ elements (loops) of $M$, which is
a flat of rank $0$. Every pair $\left\{ F,L\right\} $ of flats with
$L\subseteq F$ is modular.
\end{example}

\begin{defn}
A flat $F$ is called \textbf{modular} if $\left\{ F,L\right\} $
for all flats $L$ is modular. The matroid $M$ is called \textbf{modular}
if every flat is modular.
\end{defn}

\begin{example}
$S$ and $\overline{\emptyset}$ are modular flats. Every rank-$1$
flat is modular.
\end{example}

\begin{defn}
For a matroid $M$ on $S$, we define the \textbf{corank} of a subset
$X\subseteq S$ to be $r(M)-r(X)$.
\end{defn}

\begin{defn}
Let $M$ be a matroid of rank $\ge3$. If every pair of corank-$1$
flats is modular, we say that $M$ is \textbf{hypermodular}.
\end{defn}

Hypermodularity is preserved under contraction.
\begin{prop}
\label{prop:inherit-HM}Let $M$ be a hypermodular matroid of rank
$\ge3$, and $F$ a flat of corank $\ge3$. Then, $M/F$ is also a
hypermodular matroid.
\end{prop}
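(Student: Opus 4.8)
The plan is to exploit the order-isomorphism between $\mathcal{L}(M/F)$ and the interval $[F,S]$ of $\mathcal{L}(M)$ and to check that it preserves modular defects of pairs. First I would record the standard facts about contraction by a flat: the flats of $M/F$ are exactly the sets $G/F:=G-F$ for flats $G$ of $M$ with $G\supseteq F$, one has $r_{M/F}(G/F)=r(G)-r(F)$, and closure in the contraction is given by $\mathrm{cl}_{M/F}(X)=\mathrm{cl}_{M}(X\cup F)-F$. In particular $r(M/F)=r(M)-r(F)$ equals the corank of $F$, which is $\geq 3$ by hypothesis, so $M/F$ meets the rank requirement in the definition of hypermodularity; and $G/F$ has corank $1$ in $M/F$ if and only if $G$ has corank $1$ in $M$, since $r(M/F)-r_{M/F}(G/F)=\bigl(r(M)-r(F)\bigr)-\bigl(r(G)-r(F)\bigr)=r(M)-r(G)$.

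Next I would take two corank-$1$ flats of $M/F$ and write them as $G_{1}/F$ and $G_{2}/F$ with $G_{1},G_{2}$ corank-$1$ flats of $M$ containing $F$. Because $G_{1},G_{2}\supseteq F$, the intersection $G_{1}\cap G_{2}$ is a flat of $M$ containing $F$ (by \ref{enu:(F1)}) and is the meet of $G_{1}/F$ and $G_{2}/F$ in $\mathcal{L}(M/F)$, while the closure identity above shows that their join is $\mathrm{cl}_{M}(G_{1}\cup G_{2})/F$, of rank $r(G_{1}\cup G_{2})-r(F)$. Substituting $r_{M/F}(\cdot)=r(\cdot)-r(F)$ into the four terms of the modular defect of $\{G_{1}/F,G_{2}/F\}$ in $M/F$, the four copies of $r(F)$ cancel, leaving exactly $r(G_{1})+r(G_{2})-r(G_{1}\cup G_{2})-r(G_{1}\cap G_{2})$, which is the modular defect of $\{G_{1},G_{2}\}$ in $M$. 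Since $M$ is hypermodular and $G_{1},G_{2}$ are corank-$1$ flats, this defect is $0$, so $\{G_{1}/F,G_{2}/F\}$ is a modular pair in $M/F$.

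Combining the two observations --- that $r(M/F)\geq 3$ and that every pair of corank-$1$ flats of $M/F$ is modular --- yields that $M/F$ is hypermodular. The only point requiring care is the routine bookkeeping of flats, meets, joins, and ranks under contraction, especially the closure identity used to identify the join; I do not anticipate any genuine obstacle. Note that the hypothesis on the corank of $F$ enters solely to ensure that $M/F$ clears the rank threshold $3$ in the definition of hypermodularity --- the modular-pair condition transfers regardless of the corank.
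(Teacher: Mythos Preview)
Your argument is correct. The paper does not supply a proof of this proposition; it is stated immediately after the sentence ``Hypermodularity is preserved under contraction'' and left to the reader. Your write-up is exactly the routine verification one would expect: the lattice interval $[F,S]$ is order-isomorphic to $\mathcal{L}(M/F)$ with ranks shifted by $r(F)$, so coranks and modular defects are preserved, and the corank hypothesis on $F$ guarantees $r(M/F)\ge 3$. One minor simplification: you do not actually need the closure identity for the join, since the modular defect is defined via $r(X\cup Y)$ rather than the rank of the join, and $r_{M/F}\bigl((G_{1}/F)\cup(G_{2}/F)\bigr)=r_{M}(G_{1}\cup G_{2})-r(F)$ follows directly from the rank formula for contractions.
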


A computation shows the following.
\begin{prop}
\label{prop:non-mod pair}Let $M$ be a loopless rank-$4$ hypermodular
matroid, and $\left\{ F,L\right\} $ a pair of flats with $r(F)\ge r(L)$.
Then, $\left\{ F,L\right\} $ is non-modular if and only if $F\cap L=\emptyset$
and either $r(F)=3$ and $r(L)=2$, or $r(F)=r(L)=2$ and $r(F\cup L)=3$.
\end{prop}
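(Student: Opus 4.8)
The plan is to analyze the modular defect $\delta := r(F)+r(L)-r(F\cup L)-r(F\cap L)\ge 0$ of the pair directly, using that $M$ is loopless, so that $\overline{\emptyset}=\emptyset$ and therefore $r(F\cap L)=0$ precisely when $F\cap L=\emptyset$. The first step is to peel off all the configurations that are modular for soft reasons, and thereby cut the problem down to a short list of cases. If $L\subseteq F$ the pair is modular; if $r(L)\le 1$ then $L$ is a rank-$\le 1$ flat, hence a modular flat, so the pair is modular; if $r(F)=4$ then $F=S$ is a modular flat; and if $r(F)=r(L)=3$ then $F$ and $L$ are corank-$1$ flats, so the pair is modular by the definition of hypermodularity. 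Since $r(F)\ge r(L)$, what remains after these reductions is exactly the two cases $(r(F),r(L))=(3,2)$ and $(r(F),r(L))=(2,2)$, and in each we may additionally assume that neither of $F,L$ contains the other. I expect this reduction step to be the only place any real thought is required, and it is the unique place where hypermodularity enters.

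The second step is the bookkeeping in the two surviving cases. When $(r(F),r(L))=(3,2)$: since $L\not\subseteq F$ and $F$ is a flat, the closure of $F\cup L$ strictly contains $F$, so $r(F\cup L)=4$; and since the flat $F\cap L$ is a proper subset of the flat $L$, we get $r(F\cap L)\le 1$. Hence $\delta=1-r(F\cap L)$, which is positive exactly when $F\cap L=\emptyset$ — and in that situation $r(F\cup L)=4$ is automatic, so no further condition is needed. When $(r(F),r(L))=(2,2)$: the non-containments give $r(F\cap L)\le 1$ and $r(F\cup L)\ge 3$, while $r(F\cup L)\le r(M)=4$; so $\delta=4-r(F\cup L)-r(F\cap L)$, and since $r(F\cup L)\ge 3$ this is positive exactly when $r(F\cup L)=3$ and $r(F\cap L)=0$, i.e.\ $r(F\cup L)=3$ and $F\cap L=\emptyset$. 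Assembling the two cases gives the stated equivalence.

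There is no genuine obstacle here: once the possible ranks of flats in a rank-$4$ matroid are listed, the argument is a finite case check. The points to watch are that the reduction to the two relevant cases rests on knowing that rank-$\le 1$ flats and $S$ are modular flats and that pairs of corank-$1$ flats of a hypermodular rank-$4$ matroid are modular, and that looplessness is precisely what allows one to record the conclusion as ``$F\cap L=\emptyset$'' rather than ``$F\cap L=\overline{\emptyset}$''.
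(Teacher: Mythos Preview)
Your proof is correct. The paper does not actually give a proof of this proposition, stating only that ``a computation shows'' it; your case analysis is precisely that computation, carried out cleanly.
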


\begin{prop}[\cite{j-hope3}]
\label{prop:equiv}Let $M$ be a loopless rank-$4$ hypermodular
matroid. Then, $M$ is modular if and only if there are no two disjoint
flats of rank $3$ and $2$, if and only if no rank-$3$ flat contains
two disjoint rank-$2$ flats.
\end{prop}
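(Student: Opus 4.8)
The plan is to prove the cycle of implications (a)$\Rightarrow$(b)$\Rightarrow$(c)$\Rightarrow$(a), where (a) is the statement that $M$ is modular, (b) the statement that there are no two disjoint flats of rank $3$ and $2$, and (c) the statement that no rank-$3$ flat contains two disjoint rank-$2$ flats. Since $M$ is loopless, $\overline{\emptyset}=\emptyset$, so every flat of positive rank is nonempty and disjointness of such flats is a genuine restriction; this is used tacitly below.

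The implication (a)$\Rightarrow$(b) is immediate from the definition of modularity: if $H,\ell$ are disjoint flats of ranks $3$ and $2$, then $r(H\cup\ell)=4$ (as $\ell\not\subseteq H$) and $r(H\cap\ell)=r(\emptyset)=0$, so $\{H,\ell\}$ has modular defect $3+2-4-0=1$ and $M$ is not modular. For (b)$\Rightarrow$(c) I would argue contrapositively. Suppose a rank-$3$ flat $H$ contains disjoint rank-$2$ flats $F$ and $L$; pick an element $p\notin H$ (possible since $r(H)<r(M)$) and set $H':=\overline{F\cup\{p\}}$, a flat of rank $3$. As $H'\not\subseteq H$ we get $r(H\cup H')=4$, so submodularity gives $r(H\cap H')\le2$, and since $F\subseteq H\cap H'$ this is an equality, forcing $H\cap H'=F$. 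Hence $L\cap H'=L\cap H\cap H'=L\cap F=\emptyset$, and $\{H',L\}$ is a pair of disjoint flats of ranks $3$ and $2$, contradicting (b).

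For (c)$\Rightarrow$(a), suppose $M$ is not modular. Then Proposition~\ref{prop:non-mod pair} supplies a non-modular pair $\{F,L\}$ with $F\cap L=\emptyset$ which is either (i) a rank-$3$ flat together with a rank-$2$ flat, or (ii) two rank-$2$ flats with $r(F\cup L)=3$. In case (ii), $\overline{F\cup L}$ is a rank-$3$ flat containing the disjoint rank-$2$ flats $F$ and $L$, contradicting (c). In case (i), write $H$ for the rank-$3$ flat and $\ell$ for the rank-$2$ flat, choose $x\in H$, and set $P:=\overline{\ell\cup\{x\}}$, a flat of rank $3$. Then $H$ and $P$ are distinct flats of corank $1$ (distinct because $\ell\subseteq P$ while $\ell\cap H=\emptyset$), so hypermodularity makes $\{H,P\}$ a modular pair; since $P\not\subseteq H$ we have $r(H\cup P)=4$, whence $r(H\cap P)=2$. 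Now $\ell$ and $H\cap P$ are rank-$2$ flats contained in $P$ with $\ell\cap(H\cap P)\subseteq\ell\cap H=\emptyset$, again contradicting (c). This closes the cycle.

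The step that needs real care is case (i) above: hypermodularity, as defined, only constrains pairs of corank-$1$ flats, so the argument hinges on manufacturing the auxiliary rank-$3$ flat $P=\overline{\ell\cup\{x\}}$ and then reading off $r(H\cap P)=2$ from the modular-pair identity for $\{H,P\}$; the implication (b)$\Rightarrow$(c) rests analogously on the auxiliary flat $\overline{F\cup\{p\}}$. Once these auxiliary flats are in hand, everything else is routine rank arithmetic, so I do not anticipate a serious obstacle beyond that bookkeeping.
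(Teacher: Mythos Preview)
Your argument is correct. The paper does not supply its own proof of this proposition; it simply cites \cite{j-hope3}, so there is nothing to compare against directly. That said, the constructions you use in (c)$\Rightarrow$(a)---forming $P=\overline{\ell\cup\{x\}}$ from a point of $H$ and invoking hypermodularity to get $r(H\cap P)=2$---are exactly the manoeuvres the paper deploys later (in the discussion preceding Proposition~\ref{prop:gen_nonmod}) to pass between non-modular pairs of type $(3,2)$ and type $(2,2)$, so your proof is very much in the spirit of the paper's toolkit. One small remark: in your (b)$\Rightarrow$(c) step you only need submodularity, as you wrote, but hypermodularity of $\{H,H'\}$ would give the same conclusion with equality directly; either route is fine.
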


The above propositions suggest what to do for the modular extensions
of rank-$4$ hypermodular matroids: reduce the number of the mentioned
non-modular pairs.

We finish this section with the following weakening of Lemma 3.5 of
\cite{j-hope3}.
\begin{lem}
\label{lem:Disjoint0}Let $M$ be a rank-$4$ loopless hypermodular
but non-modular matroid. Let $F$ and $L$ be disjoint flats of rank
$3$ and $2$, respectively, and $A_{1},\dots,A_{n}$ all rank-$3$
flats containing $L$. Then, $n\ge3$.
\end{lem}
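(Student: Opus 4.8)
The plan is to argue by contradiction: assuming $n\le 2$, I will produce two disjoint corank-$1$ flats, which is impossible in a hypermodular matroid (distinct corank-$1$ flats form a modular pair, hence meet in a rank-$2$ flat, which is nonempty since $M$ is loopless). First I would record that the rank-$3$ flats containing $L$ are exactly the minimal flats properly containing $L$ — any rank-$3$ flat over the rank-$2$ flat $L$ must cover it, as no intermediate rank is available — so by \ref{enu:(F2')} the sets $A_1-L,\dots,A_n-L$ partition $S-L$. Since $F$ is a rank-$3$ flat with $F\cap L=\emptyset\ne L$, it is none of the $A_i$, yet $F\subseteq\bigcup_i A_i$; if $n=1$ this forces $F\subseteq A_1$ and hence $F=A_1$, a contradiction, so $n\ge 2$. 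It remains to rule out $n=2$.

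Assume $n=2$. Then $A_1\cap A_2=L$ and $S=L\sqcup(A_1-L)\sqcup(A_2-L)$, so $F=(F\cap A_1)\sqcup(F\cap A_2)$ with $L_i:=F\cap A_i$ disjoint flats (disjoint because $L_1\cap L_2=F\cap A_1\cap A_2=F\cap L=\emptyset$). As $F,A_1,A_2$ are distinct corank-$1$ flats, hypermodularity makes $\{F,A_i\}$ modular, so $r(L_i)=r(F)+r(A_i)-r(F\cup A_i)=3+3-4=2$; being flats of a loopless matroid, $L_1$, $L_2$, and also $L$ each contain at least two rank-$1$ flats. One also reads off $S=L\sqcup L_1\sqcup L_2$, $L_1\cap A_2=L_2\cap A_1=L\cap F=\emptyset$, and $A_i\supseteq L\sqcup L_i$. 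Geometrically this is the configuration of three pairwise-coplanar disjoint lines spanning rank $4$, and the idea is that such a configuration contains two disjoint ``transversal'' planes.

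The key construction: choose elements $a_1,a_2\in L_1$, $b_1,b_2\in L$, $c_1,c_2\in L_2$ with $\overline{a_1}\ne\overline{a_2}$, $\overline{b_1}\ne\overline{b_2}$, $\overline{c_1}\ne\overline{c_2}$, and set $P_i:=\overline{\{a_i,b_i,c_i\}}$. Since $\overline{\{a_i,c_i\}}$ has rank $2$ and lies in $F$, which misses $b_i\in L$, each $P_i$ is a corank-$1$ flat, and it differs from $A_1$ and from $A_2$ because $c_i\notin A_1$ and $a_i\notin A_2$. Hypermodularity then gives that $P_i\cap A_1$ and $P_i\cap A_2$ are rank-$2$ flats, necessarily $\overline{\{a_i,b_i\}}$ and $\overline{\{b_i,c_i\}}$ respectively. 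Intersecting further with $L,L_1,L_2$, and using that a line through a rank-$1$ flat of $L$ (resp.\ $L_1$, $L_2$) cannot meet $L$ (resp.\ $L_1$, $L_2$) in a second rank-$1$ flat without containing all of it, one finds $P_i\cap L=\overline{b_i}$, $P_i\cap L_1=\overline{a_i}$, $P_i\cap L_2=\overline{c_i}$, so $P_i=\overline{a_i}\sqcup\overline{b_i}\sqcup\overline{c_i}$ as a point set. Because the six rank-$1$ flats $\overline{a_1},\overline{a_2},\overline{b_1},\overline{b_2},\overline{c_1},\overline{c_2}$ are pairwise distinct (the two in each of the disjoint flats $L_1,L,L_2$ were chosen distinct), $P_1$ and $P_2$ are disjoint corank-$1$ flats — the desired contradiction. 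Hence $n\ne 2$, so $n\ge 3$.

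I expect the main obstacle to be pinning down the point set of each $P_i$ exactly — showing it collapses to just $\overline{a_i}\sqcup\overline{b_i}\sqcup\overline{c_i}$ — since this is precisely what lets two planes built from disjoint data be literally disjoint; everything upstream (the bound $n\ge 2$ and the decomposition $F=L_1\sqcup L_2$) is routine manipulation of \ref{enu:(F2')} and the modularity of corank-$1$ pairs. A minor recurring technicality is the presence of parallel elements, which is dispatched by working throughout with rank-$1$ flats rather than with individual elements.
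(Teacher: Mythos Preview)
There is a genuine gap. The assertion ``One also reads off $S=L\sqcup L_1\sqcup L_2$'' does not follow from your assumptions. From $n=2$ and \ref{enu:(F2')} you correctly obtain $S=A_1\cup A_2$ with $A_1\cap A_2=L$, and from hypermodularity you correctly obtain $F=L_1\sqcup L_2$; but nothing forces $A_i-L$ to coincide with $L_i=F\cap A_i$. The hyperplane $A_i$ may well contain elements outside $L\cup L_i$. Without $S=L\sqcup L_1\sqcup L_2$, the facts $P_i\cap L=\overline{b_i}$, $P_i\cap L_1=\overline{a_i}$, $P_i\cap L_2=\overline{c_i}$ do \emph{not} give $P_i=\overline{a_i}\sqcup\overline{b_i}\sqcup\overline{c_i}$: the rank-$2$ flat $P_i\cap A_1=\overline{\{a_i,b_i\}}$ can contain points of $A_1$ lying in neither $L$ nor $L_1$, and then $\overline{\{a_1,b_1\}}\cap\overline{\{a_2,b_2\}}$ may be a rank-$1$ flat sitting in $A_1-(L\cup L_1)$, so $P_1\cap P_2$ need not be empty. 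You yourself flagged ``pinning down the point set of each $P_i$'' as the main obstacle, and the argument as written does not clear it.

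A partial salvage: since you \emph{do} know $F=L_1\sqcup L_2$, running your intersection argument with $F$ in place of $A_1,A_2$ gives $P_i\cap F=\overline{\{a_i,c_i\}}$ and hence $P_1\cap P_2\cap F\subseteq(\overline{a_1}\cap\overline{a_2})\cup(\overline{c_1}\cap\overline{c_2})=\emptyset$. So the rank-$2$ flat $Q:=P_1\cap P_2$ is disjoint from $F$; but that only manufactures another non-modular pair $\{F,Q\}$ of the same shape you started with, not a contradiction. Closing the argument from here requires a further idea. (The paper itself does not prove this lemma; it quotes it as a weakening of Lemma~3.5 of \cite{j-hope3}, so there is no in-paper proof to compare against.)
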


\section{\label{sec:Extensions}Crapo's Single-element Extensions}

In this section, we review Crapo's single-element extensions of matroids.
Readers are referred to \cite{Crapo,Oxley} for further study.
\begin{defn}
A set $\mathcal{M}$ of flats of a matroid $M$ is called a \textbf{modular
cut} of $M$ if $\mathcal{M}$ satisfies the following two conditions.
\begin{enumerate}
\item If $L\in\mathcal{M}$, then $\mathcal{M}$ contains all flats $F$
of $M$ with $F\supset L$.
\item If $\left\{ F,L\right\} $ is a modular pair in $\mathcal{M}$, then
$\mathcal{M}$ contains $F\cap L$.
\end{enumerate}
\end{defn}

The empty set is said to be the \textbf{empty} modular cut. The intersection
of modular cuts is a modular cut.\smallskip{}

Let $\mathcal{T}$ be a set of corank-$2$ flats of $M$. Let $\mathrm{LS}(\mathcal{T})=\mathrm{LS}_{M}(\mathcal{T})$
denote the smallest set of corank-$1$ flats such that $\mathcal{T}'=\left\{ F\cap L:F,L\in\mathrm{LS}(\mathcal{T})\right\} $
contains $\mathcal{T}$ and every corank-$1$ flat containing a corank-$2$
flat in $\mathcal{T}'$ is a member of $\mathrm{LS}(\mathcal{T})$.
We call $\mathrm{LS}(\mathcal{T})$ the \textbf{linear subclass} \textbf{generated
by} $\mathcal{T}$, or simply a \textbf{linear subclass}. When $\mathcal{T}=\emptyset$,
we have $\mathrm{LS}(\emptyset)=\emptyset$ which is called the \textbf{empty}
linear subclass.\smallskip{}

Let $\mathrm{MC}(\mathcal{T})=\mathrm{MC}_{M}(\mathcal{T})$ denote
the union of $\left\{ E(M)\right\} $ and the set of all flats $F$
satisfying that every flat containing $F$ is an intersection of members
of $\mathrm{LS}(\mathcal{T})$, or equivalently, all flats $F$ such
that every corank-$1$ flat containing $F$ is a member of $\mathrm{LS}(\mathcal{T})$.
We say that $\mathrm{MC}(\mathcal{T})$ is \textbf{generated by }$\mathrm{LS}(\mathcal{T})$.
\begin{prop}
This set $\mathrm{MC}(\mathcal{T})$ is a nonempty modular cut.
\end{prop}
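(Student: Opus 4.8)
The plan is to verify the two defining conditions of a modular cut for $\mathcal{M} := \mathrm{MC}(\mathcal{T})$, together with nonemptiness, working entirely from the characterization of $\mathrm{MC}(\mathcal{T})$ as the union of $\{E(M)\}$ and the set of flats $F$ such that every corank-$1$ flat containing $F$ lies in $\mathrm{LS}(\mathcal{T})$. Nonemptiness is immediate since $E(M)\in\mathrm{MC}(\mathcal{T})$ by definition. For the upward-closure condition, I would take $L\in\mathcal{M}$ and a flat $F\supset L$; if $F=E(M)$ there is nothing to check, and otherwise every corank-$1$ flat $H$ containing $F$ also contains $L$, hence lies in $\mathrm{LS}(\mathcal{T})$ because $L\in\mathcal{M}$, so $F\in\mathcal{M}$.

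The substantive step is the modular-pair condition: given a modular pair $\{F,L\}$ with $F,L\in\mathcal{M}$, show $F\cap L\in\mathcal{M}$. I would first dispose of the degenerate cases where $F$ or $L$ equals $E(M)$ (then $F\cap L$ is the other one, already in $\mathcal{M}$), so assume both $F,L\neq E(M)$, i.e. every corank-$1$ flat containing $F$, and every one containing $L$, belongs to $\mathrm{LS}(\mathcal{T})$. The goal is to show every corank-$1$ flat $H\supseteq F\cap L$ lies in $\mathrm{LS}(\mathcal{T})$. The idea is to use modularity of $\{F,L\}$ to produce, from such an $H$, corank-$1$ flats above $F$ and above $L$ whose intersection relates to $H$, and then invoke the closure property built into the definition of $\mathrm{LS}(\mathcal{T})$ — namely that $\mathrm{LS}(\mathcal{T})$ is closed under "every corank-$1$ flat containing an intersection $F'\cap L'$ of two of its members is again a member." Concretely: the join $F\vee L$ has corank at least $1$; choosing a corank-$1$ flat $H'$ with $F\vee L\subseteq H'$ (or $H'=F\vee L$ itself if that already has corank $1$) gives $H'\in\mathrm{LS}(\mathcal{T})$ since $H'\supseteq F$. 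Then within the interval below $H'$ one works out, using the rank identity $r(F)+r(L)=r(F\vee L)+r(F\cap L)$, that $H$ can be written as an intersection $H_1\cap H_2$ of corank-$1$ flats each containing $F$ or $L$, or is forced into $\mathrm{LS}(\mathcal{T})$ by the generation rule applied to such an intersection.

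I expect the main obstacle to be the bookkeeping in this last move: one must genuinely use the modular-pair hypothesis (not merely $F\cap L$ being a flat) to guarantee that an arbitrary corank-$1$ flat $H$ above $F\cap L$ is captured by the intersection-closure of $\mathrm{LS}(\mathcal{T})$, and getting the rank count right — especially when $F\cap L$ has small corank so that there are many corank-$1$ flats above it — is where the argument has to be careful. The key structural fact to lean on is that $\mathrm{LS}(\mathcal{T})$ is defined as the \emph{smallest} set of corank-$1$ flats closed under the stated operation, so it suffices to exhibit, for each relevant $H$, a pair in $\mathrm{LS}(\mathcal{T})$ whose intersection is contained in $H$; modularity of $\{F,L\}$ is precisely what makes $F\cap L$ expressible through such pairs, and hence propagates the $\mathrm{LS}$-membership downward from $F$ and $L$ to $F\cap L$.
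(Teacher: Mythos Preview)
The paper does not supply a proof of this proposition: it sits in the review section on Crapo's single-element extensions and is treated as a standard fact, with the reader referred to \cite{Crapo,Oxley}. So there is no paper proof to compare against, and your attempt must be judged on its own.

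Nonemptiness and upward closure are handled correctly. The modular-pair step, however, is not yet a proof, and your sketch contains both a concrete error and a genuine gap. The error: you assert that $F\vee L$ has corank at least $1$, but nothing prevents $F\vee L=E(M)$ even when $F,L\neq E(M)$; two distinct hyperplanes already give a counterexample. The gap is more serious. The closure rule defining $\mathrm{LS}(\mathcal{T})$ only lets you pass from $H_1,H_2\in\mathrm{LS}(\mathcal{T})$ to a third hyperplane $H\supseteq H_1\cap H_2$ when $H_1\cap H_2$ has corank exactly $2$. For an arbitrary hyperplane $H\supseteq F\cap L$ there is in general no single pair $H_1\supseteq F$, $H_2\supseteq L$ of hyperplanes with $H_1\cap H_2\subseteq H$, so the ``write $H$ as (or above) an intersection $H_1\cap H_2$'' shortcut does not work in one move.

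The standard argument (as in Oxley, \S7.2) is inductive: one descends from $F$ toward $F\cap L$ one cover at a time. If $F'\lessdot F$ with $F\cap L\subseteq F'$, modularity of $\{F,L\}$ forces $r(F'\vee L)=r(F\vee L)-1$, so $F'\vee L$ is a proper flat and one can find a hyperplane $H_0\supseteq L$ (hence $H_0\in\mathrm{LS}(\mathcal{T})$) containing $F'$ but not $F$. A single application of the linear-subclass rule then shows every hyperplane above $F'$ lies in $\mathrm{LS}(\mathcal{T})$, i.e.\ $F'\in\mathcal{M}$; moreover $\{F',L\}$ is again a modular pair with $F'\cap L=F\cap L$, so one iterates. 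Your proposal names the right ingredients (modularity plus the $\mathrm{LS}$ closure rule) but does not set up this descent, and as written the ``concretely'' paragraph does not go through.
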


There is a one-to-one correspondence between linear subclasses and
nonempty modular cuts where for a nonempty modular cut, the set of
all corank-$1$ flats in it is its corresponding linear subclass.
Note that the modular cut generated by the empty linear subclass is
not $\emptyset$ but $\left\{ E(M)\right\} $.
\begin{defn}
A matroid $N$ on $E(N)=E(M)\sqcup\{e\}$ is called a \textbf{single-element
extension} of $M$ if $M=N|_{E(M)}$.
\end{defn}

\begin{prop}
\label{prop:Crapo}For a modular cut $\mathcal{M}$ of a matroid $M$,
there is a single-element extension $N$ with $E(N)=E(M)\sqcup\left\{ e\right\} $,
which satisfies that for all $X\subseteq E(N)$ 
\[
r_{N}(X)=\begin{cases}
r_{M}(X-\left\{ e\right\} )+1 & \text{if }e\in X\text{ and }\overline{X-\left\{ e\right\} }\notin\mathcal{M},\\
r_{M}(X-\left\{ e\right\} ) & \text{otherwise},
\end{cases}
\]
 where the closure operation is taken in $M$. To a same-rank single-element
extension of $M$ there corresponds a unique linear subclass of $M$,
and vice versa.
\end{prop}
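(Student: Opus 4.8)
The plan is to verify directly that the function $r_{N}$ given by the displayed formula is the rank function of a matroid on $E(N)=E(M)\sqcup\{e\}$, and then to pin down which of these extensions satisfy $r(N)=r(M)$. I would first check the incremental rank axioms. Normalization $r_{N}(\emptyset)=0$ is immediate since $e\notin\emptyset$. The unit-increase property $r_{N}(X)\le r_{N}(X\cup\{x\})\le r_{N}(X)+1$ is a short case check on whether $x=e$ or $x\in E(M)$ and whether $e\in X$; the only delicate sub-case is $x\in E(M)$ and $e\in X$, where the extra ``$+1$'' can be lost as $X$ grows, but by upward-closure (condition (1) of a modular cut) this happens only when $\overline{X-\{e\}}$ first lands in $\mathcal{M}$, forcing $x\notin\overline{X-\{e\}}$ and hence $r_{M}$ to increase by exactly $1$ at the same moment, so the net change stays in $\{0,1\}$. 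Once $r_{N}$ is known to be a matroid rank function, $N|_{E(M)}=M$ is automatic, since $e\notin X$ makes $r_{N}(X)=r_{M}(X)$.

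The substantive step is submodularity. Writing $X'=X-\{e\}$, $Y'=Y-\{e\}$ and letting $a,b,c,d\in\{0,1\}$ be the indicators of $\overline{X'}\notin\mathcal{M}$, $\overline{Y'}\notin\mathcal{M}$, $\overline{X'\cup Y'}\notin\mathcal{M}$, $\overline{X'\cap Y'}\notin\mathcal{M}$, I would subtract the submodular inequality of $r_{M}$ for $\{X',Y'\}$ (with modular defect $\delta\ge0$) from the desired inequality $r_{N}(X)+r_{N}(Y)\ge r_{N}(X\cup Y)+r_{N}(X\cap Y)$. In the cases $e\notin X\cup Y$ and $e$ in exactly one of $X,Y$ this leaves something that follows from $\delta\ge0$ together with $c\le\min(a,b)$, the latter being upward-closure (condition (1)). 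The remaining case $e\in X\cap Y$ reduces to $a+b+\delta\ge c+d$; using $c\le\min(a,b)$ again, it suffices to rule out $a=b=\delta=0$ with $d=1$, i.e.\ to show that if $\overline{X'},\overline{Y'}\in\mathcal{M}$ and $\{X',Y'\}$ is a modular pair of sets, then $\overline{X'\cap Y'}\in\mathcal{M}$. This is exactly where condition (2) of a modular cut is needed: one checks that a modular pair of sets $\{X',Y'\}$ has $\overline{X'\cap Y'}=\overline{X'}\cap\overline{Y'}$ and that $\{\overline{X'},\overline{Y'}\}$ is then a modular pair of flats, so condition (2) puts $\overline{X'}\cap\overline{Y'}=\overline{X'\cap Y'}$ into $\mathcal{M}$, giving $d=0$. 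I expect this closure-versus-intersection identity, carried through the whole case split, to be the main obstacle; the remaining verifications are bookkeeping.

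For the correspondence, the key observation is that a single-element extension $N$ has $r(N)=r(M)$ precisely when $e$ lies in the $N$-closure of $E(M)$, equivalently when its associated modular cut is nonempty. Given such an $N$, put $\mathcal{M}_{N}=\{F\in\mathcal{L}(M):r_{N}(F\cup\{e\})=r_{N}(F)\}$; monotonicity and submodularity of $r_{N}$ show $\mathcal{M}_{N}$ satisfies conditions (1) and (2), it contains $E(M)$ and so is nonempty, and comparing rank functions shows $N$ is exactly the extension the formula produces from $\mathcal{M}_{N}$. Conversely, a nonempty modular cut $\mathcal{M}$ contains $E(M)$ by condition (1) (as $E(M)$ contains every flat), so $\overline{E(M)}=E(M)\in\mathcal{M}$ and the formula yields $r_{N}(E(N))=r_{M}(E(M))$, i.e.\ $r(N)=r(M)$. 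Distinct modular cuts give distinct $r_{N}$ — evaluate at $F\cup\{e\}$ for a flat $F$ in their symmetric difference — so this is a bijection between same-rank single-element extensions of $M$ and nonempty modular cuts of $M$; composing with the bijection between nonempty modular cuts and linear subclasses stated earlier in this section gives the claimed one-to-one correspondence.
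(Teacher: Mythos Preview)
Your verification is correct, but note that the paper does not actually prove this proposition: Section~\ref{sec:Extensions} is explicitly a review, and Proposition~\ref{prop:Crapo} is stated without proof, with the reader referred to \cite{Crapo,Oxley}. So there is no ``paper's own proof'' to compare against; what you have written is essentially the standard argument one finds in those references.

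That said, your direct check of the rank axioms is sound. The one step worth a remark is the claim that a modular pair of \emph{sets} $\{X',Y'\}$ satisfies $\overline{X'\cap Y'}=\overline{X'}\cap\overline{Y'}$: this does follow, since from $r(X')+r(Y')=r(X'\cup Y')+r(X'\cap Y')$ and submodularity applied to the closures one gets both that $\{\overline{X'},\overline{Y'}\}$ is a modular pair of flats and that $r(X'\cap Y')=r(\overline{X'}\cap\overline{Y'})$, whence the flat $\overline{X'\cap Y'}\subseteq\overline{X'}\cap\overline{Y'}$ of the same rank must equal it. With that in hand, condition~(2) of the modular cut disposes of the only nontrivial submodularity case exactly as you describe. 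The correspondence argument, recovering $\mathcal{M}_{N}$ from $N$ and then invoking the paper's already-stated bijection between nonempty modular cuts and linear subclasses, is also correct.
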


\begin{notation}
The above single-element extension of $M$ by $\mathcal{M}$ is denoted
by $M+_{\mathcal{M}}e$.
\end{notation}

Let $\mathcal{M}$ be a modular cut of a matroid $M$. If $\mathcal{M}$
is the collection of all flats that contain a flat $F$, it is called
the \textbf{principal modular cut generated by} $F$. Then, $\mathcal{M}$
contains $\overline{\emptyset}_{M}$ if and only if $\mathcal{M}=\mathcal{L}(M)$,
that is, $\mathcal{L}(M)$ is the principal modular cut generated
by $\overline{\emptyset}_{M}$. We call $\mathcal{M}$ a \textbf{proper
modular cut} if $\overline{\emptyset}_{M}\notin\mathcal{M}$.

More generally, the smallest modular cut containing a set $\left\{ F_{1},\dots,F_{m}\right\} $
of flats is called the \textbf{modular cut generated by} $\left\{ F_{1},\dots,F_{m}\right\} $.
Note that, then, $\mathrm{MC}(\mathcal{T})$ is the modular cut generated
by $\mathcal{T}$.\smallskip{}

Next, we list a few corollaries of Proposition \ref{prop:Crapo} which
play a crucial role later in Section \ref{sec:Modular}.
\begin{cor}
\label{cor:Crapo1}For any modular cut $\mathcal{M}$ of $M$, let
$N=M+_{\mathcal{M}}e$. Then, there is a rank preserving injection
$\varphi=\varphi_{\mathcal{M}}$ from $\mathcal{L}(M)$ to $\mathcal{L}(N)$,
which satisfies 
\[
\varphi(L)=\begin{cases}
L & \text{if }L\notin\mathcal{M},\\
L\cup\left\{ e\right\}  & \text{if }L\in\mathcal{M}.
\end{cases}
\]
Further, $F\in\mathcal{L}(N)$ is not in the image of $\varphi$ if
and only if $r_{N}(F)=r_{M}(F-\left\{ e\right\} )+1$, if and only
if $F=L\cup\left\{ e\right\} $ for a proper flat $L$ that no member
of $\mathcal{M}$ covers in $\mathcal{L}(M)$.
\end{cor}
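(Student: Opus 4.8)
The plan is to construct $\varphi$ by the displayed formula and check all its properties directly from the rank formula of Proposition~\ref{prop:Crapo}; throughout write $r=r_N$ and recall that $r$ agrees with $r_M$ on subsets of $E(M)$, so $\overline{X}_N\cap E(M)=\overline{X}_M$ for every $X\subseteq E(M)$. \emph{Step 1 ($\varphi$ is well defined, rank-preserving, injective).} Let $L\in\mathcal L(M)$. If $L\notin\mathcal M$, then $\overline L_M=L\notin\mathcal M$ gives $r(L\cup\{e\})=r_M(L)+1>r(L)$, so $e\notin\overline L_N$; with $\overline L_N\cap E(M)=L$ this yields $\overline L_N=L$, a flat of $N$ of rank $r_M(L)$. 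If $L\in\mathcal M$, then $r(L\cup\{e\})=r_M(L)$, while for every $s\in E(M)-L$ one has $r\bigl((L\cup\{e\})\cup\{s\}\bigr)\ge r_M(L\cup\{s\})=r_M(L)+1$; hence $L\cup\{e\}$ is a flat of $N$, again of rank $r_M(L)$. So $\varphi\colon\mathcal L(M)\to\mathcal L(N)$ is well defined and rank-preserving, and it is injective, since two flats in the same case agree as soon as their images do, while an image containing $e$ cannot equal one not containing $e$ (as $e\notin E(M)$).

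\emph{Step 2 (the image).} Let $F\in\mathcal L(N)$. If $e\notin F$, then $\overline F_M=F$, and since $F$ is closed in $N$ we have $r(F\cup\{e\})>r(F)$, which by the rank formula means $F\notin\mathcal M$; hence $F=\varphi(F)$. If $e\in F$, put $L:=F-\{e\}$; then $\overline L_M=\overline L_N\cap E(M)\subseteq F\cap E(M)=L$, so $L$ is a flat of $M$ and $F=L\cup\{e\}$. The rank formula now yields the dichotomy $r(F)=r_M(L)$ if $L\in\mathcal M$, and $r(F)=r_M(L)+1$ if $L\notin\mathcal M$. In the first case $F=\varphi(L)$ lies in the image; in the second it does not, since $F=\varphi(K)$ would force $K\cup\{e\}=L\cup\{e\}$, hence $L=K\in\mathcal M$. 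This already proves that $F$ is outside the image of $\varphi$ if and only if $e\in F$ and $r_N(F)=r_M(F-\{e\})+1$.

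\emph{Step 3 (the covering description).} Fix a flat $L$ of $M$; by Step~2 it remains to show that $L\cup\{e\}$ is a flat of $N$ outside the image exactly when $L$ is proper and no member of $\mathcal M$ covers $L$. Taking $\mathcal M$ nonempty (so $E(M)\in\mathcal M$ and $r(N)=r(M)$), the identity $r(L\cup\{e\})=r_M(L)+1\le r(M)$ shows such an $L$ is proper. If $K$ is a flat covering $L$, then $K=\overline{L\cup\{s\}}_M$ for each $s\in K-L$, and the rank formula gives $r\bigl((L\cup\{e\})\cup\{s\}\bigr)=r_M(K)=r_M(L)+1=r(L\cup\{e\})$ precisely when $K\in\mathcal M$, i.e.\ $s\in\overline{L\cup\{e\}}_N$. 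Hence $L\cup\{e\}$ is closed in $N$ if and only if no flat covering $L$ lies in $\mathcal M$; and if $L$ itself were in $\mathcal M$, then since $L$ is proper some flat covers it (axiom~\ref{enu:(F2)}) and that flat lies in $\mathcal M$, which excludes this case. Combining with Step~2 gives the last equivalence.

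All of this is bookkeeping once Proposition~\ref{prop:Crapo} is available; the only genuinely delicate point is Step~3, where the right hypothesis is that no flat \emph{covering} $L$ belongs to $\mathcal M$ (rather than merely $L\notin\mathcal M$), and it is the closure computation inside $N$ that pins this down. The empty modular cut, for which $e$ becomes a coloop and $r(N)=r(M)+1$, is the degenerate case and is best treated separately.
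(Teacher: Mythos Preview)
Your proof is correct and proceeds exactly as the paper intends: the paper states this result as a corollary of Proposition~\ref{prop:Crapo} without giving a proof, and your three steps carry out precisely the bookkeeping with the rank formula that the word ``corollary'' promises. Your observation in Step~3 that the empty modular cut is degenerate (since then $E(M)\cup\{e\}=E(N)$ lies outside the image while $E(M)$ is not proper) is apt; the paper only ever applies this corollary to nonempty modular cuts.
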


\begin{cor}
\label{cor:Crapo2}Let $\mathcal{M}$ be a modular cut of $M$ with
$N=M+_{\mathcal{M}}e$. Then, 
\begin{enumerate}
\item $\mathcal{M}=\emptyset$ if and only if $r(N)=r(M)+1$,
\item $\overline{\emptyset}_{M}\in\mathcal{M}$ if and only if $e\in\overline{\emptyset}_{N}$.
\end{enumerate}
\end{cor}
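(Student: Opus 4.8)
The plan is to obtain both equivalences by substituting two well-chosen subsets of $E(N)$ into the rank formula of Proposition~\ref{prop:Crapo}; no induction or auxiliary construction is needed.

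For part (1), I would apply the formula to $X = E(N) = E(M)\sqcup\{e\}$. Since $\overline{E(M)} = E(M)$, the formula gives $r_N(E(N)) = r_M(E(M)) + 1 = r(M)+1$ exactly when $E(M)\notin\mathcal{M}$, and $r_N(E(N)) = r(M)$ when $E(M)\in\mathcal{M}$. It then remains to note that $E(M)\notin\mathcal{M}$ is the same condition as $\mathcal{M}=\emptyset$: if $\mathcal{M}$ contains some flat $L$, then either $L=E(M)$, or $E(M)\supset L$ and the first modular-cut axiom puts $E(M)$ into $\mathcal{M}$, so in both cases $E(M)\in\mathcal{M}$. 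Hence $\mathcal{M}\neq\emptyset$ if and only if $E(M)\in\mathcal{M}$, if and only if $r(N)=r(M)$, which is the contrapositive of the claim. (Equivalently, one may quote the correspondence recorded right after Proposition~\ref{prop:Crapo} between same-rank single-element extensions and linear subclasses, i.e.\ nonempty modular cuts.)

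For part (2), I would apply the formula to the singleton $X=\{e\}$. Here $X-\{e\}=\emptyset$ and $\overline{\emptyset}=\overline{\emptyset}_M$, so the formula yields $r_N(\{e\}) = r_M(\emptyset)+1 = 1$ when $\overline{\emptyset}_M\notin\mathcal{M}$ and $r_N(\{e\}) = r_M(\emptyset) = 0$ when $\overline{\emptyset}_M\in\mathcal{M}$. Since $e\in\overline{\emptyset}_N$ means precisely that $e$ is a loop of $N$, i.e.\ that $r_N(\{e\})=0$, this gives $e\in\overline{\emptyset}_N$ if and only if $\overline{\emptyset}_M\in\mathcal{M}$, as desired. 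Alternatively, via Corollary~\ref{cor:Crapo1}: $\varphi(\overline{\emptyset}_M)$ is a rank-$0$ flat of $N$, hence equals $\overline{\emptyset}_N$ by uniqueness of the set of loops, and it contains $e$ precisely when $\overline{\emptyset}_M\in\mathcal{M}$.

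I do not anticipate a real obstacle: once the correct $X$ is plugged into the rank formula, each statement falls out immediately, and the only non-mechanical point is the elementary observation that the whole ground set $E(M)$ belongs to every nonempty modular cut.
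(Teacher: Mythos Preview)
Your proposal is correct. The paper states this result as an immediate corollary of Proposition~\ref{prop:Crapo} without proof; your argument---substituting $X=E(N)$ and $X=\{e\}$ into the rank formula and using that $E(M)$ lies in every nonempty modular cut---is exactly the routine verification the paper leaves implicit.
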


\begin{cor}
\label{cor:Crapo3}Let $\mathcal{M}$ be the non-principal modular
cut of $M$ generated by a non-modular pair $\left\{ F,L\right\} $.
Then, the modular defect of the pair $\left\{ \varphi(F),\varphi(L)\right\} $
of $M+_{\mathcal{M}}e$ is one less than that of the pair $\left\{ F,L\right\} $
of $M$.
\end{cor}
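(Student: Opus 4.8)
The plan is to evaluate the modular defect of $\{\varphi(F),\varphi(L)\}$ in $N:=M+_{\mathcal M}e$ term by term, using Corollary~\ref{cor:Crapo1} together with the rank formula of Proposition~\ref{prop:Crapo}; the only ingredient beyond routine bookkeeping will be the claim that $F\cap L\notin\mathcal M$.

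First I would record that $F,L\in\mathcal M$, since $\mathcal M$ is by definition the smallest modular cut containing $\{F,L\}$. Hence Corollary~\ref{cor:Crapo1} gives $\varphi(F)=F\cup\{e\}$ and $\varphi(L)=L\cup\{e\}$, as well as $r_N(\varphi(F))=r(F)$ and $r_N(\varphi(L))=r(L)$. As subsets of $E(N)$ one has $\varphi(F)\cup\varphi(L)=(F\cup L)\cup\{e\}$ and $\varphi(F)\cap\varphi(L)=(F\cap L)\cup\{e\}$. Since $\mathcal M$ is up-closed and contains $F$, the closure $\overline{F\cup L}$ (which contains $F$) lies in $\mathcal M$, so Proposition~\ref{prop:Crapo} yields $r_N\big((F\cup L)\cup\{e\}\big)=r(F\cup L)$. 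Likewise, $F\cap L$ is a flat of $M$, so once we know $F\cap L\notin\mathcal M$ the same proposition gives $r_N\big((F\cap L)\cup\{e\}\big)=r(F\cap L)+1$.

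The crux, and the step I expect to take the most care, is proving $F\cap L\notin\mathcal M$; this is exactly where the hypothesis that $\mathcal M$ is non-principal is used. Set $G:=F\cap L$ and let $\mathcal P_G$ be the collection of all flats of $M$ containing $G$, i.e.\ the principal modular cut generated by $G$ (it is a modular cut: it is up-closed, and for any pair $A,B\supseteq G$ one has $A\cap B\supseteq G$). Since $G\subseteq F$ and $G\subseteq L$, both $F$ and $L$ belong to $\mathcal P_G$, so minimality of $\mathcal M$ forces $\mathcal M\subseteq\mathcal P_G$. If $G$ itself were in $\mathcal M$, then up-closedness of $\mathcal M$ would give $\mathcal P_G\subseteq\mathcal M$, hence $\mathcal M=\mathcal P_G$, contradicting non-principality. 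Therefore $G=F\cap L\notin\mathcal M$, as needed.

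Assembling the four ranks, the modular defect of $\{\varphi(F),\varphi(L)\}$ in $N$ is
\[
r_N(\varphi(F))+r_N(\varphi(L))-r_N(\varphi(F)\cup\varphi(L))-r_N(\varphi(F)\cap\varphi(L))=r(F)+r(L)-r(F\cup L)-r(F\cap L)-1,
\]
which is one less than the modular defect of $\{F,L\}$ in $M$, proving the corollary. Everything apart from the step $F\cap L\notin\mathcal M$ is a direct unwinding of Crapo's construction; and it is worth noting that non-principality is genuinely needed, since if $F\cap L$ did lie in $\mathcal M$ the defect would be left unchanged rather than decremented.
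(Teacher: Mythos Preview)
Your proof is correct and is exactly the argument the paper has in mind: the corollary is stated without proof as a direct consequence of Proposition~\ref{prop:Crapo} and Corollary~\ref{cor:Crapo1}, and your write-up unwinds precisely those formulas, with the key observation that $F\cap L\notin\mathcal{M}$ following from non-principality via the sandwich $\mathcal{M}\subseteq\mathcal{P}_{F\cap L}$. There is nothing to add.
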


\section{\label{sec:Vamos}Vámos Line Arrangement}

\subsection*{Subspaces of a matroid}

Let $M$ be a matroid. A \textbf{subspace} of $M$ is a matroid $M/F$
for a flat $F$ of $M$, denoted by $\eta(M/F)$. We write $\eta(M)=M/\overline{\emptyset}$
by convention. We may omit $\eta$ when no confusion occurs, for instance,
when $M$ is loopless.\smallskip{}

The \textbf{(subspace)} \textbf{dimension} and the \textbf{(subspace)
codimension} of $\eta(M/F)$\textbf{ in} $\eta(M)$ are $r(M/F)-1$
and $r(F)$, respectively.\footnote{The dimension of a nonempty matroid $M$ is defined as $r(M\backslash\overline{\emptyset})-\kappa(M\backslash\overline{\emptyset})$
where $\kappa$ denotes the number of connected components, \cite[Section 4]{j-hope}.
In this paper, however, we only mean by dimension the subspace dimension.}\smallskip{}

A subspace is called a \textbf{point}, a \textbf{line}, and a \textbf{plane}
if its dimension is $0$, $1$, and $2$, respectively, and called
a \textbf{hyperplane} if its codimension is $1$. In the same sense,
we call $\eta(M)$ a \textbf{combinatorial projective space} of dimension
$r(M)-1$.\smallskip{}

If $F$ and $L$ are flats of $M$ with $L\subset F$, we say $M/L$
\textbf{contains} $M/F$, or $M/F$ is \textbf{contained in} $M/L$,
or $M/F$ is \textbf{lying on} $M/L$. Indeed, if $L$ is properly
contained in $F$, the independent-set collection of $M/L$ properly
contains that of $M/F$, that is, $L\subset F$ implies $\mathcal{I}(M/L)\supset\mathcal{I}(M/F)$.\smallskip{}

The set of all subspaces of $\eta(M)$ and the dual lattice of the
geometric lattice of the loopless matroid $M\backslash\overline{\emptyset}$
both are isomorphic as lattices.\footnote{This poset duality naturally arises by identifying the set of defining
equations of an intersection of hyperplanes in a projective space
over a field, with a flat of the corresponding matroid, which is not
just about inverting the partial order of a lattice, but has many
practical advantages. One can perform geometry over the dual poset
such as blowing-up, contracting, MMP and more. We refer readers to
\cite[Section 4]{j-hope} for more about the theory of combinatorial
hyperplane arrangements.} We say $M/F$ and $M/L$ \textbf{meet in} $M/\overline{F\cup L}$,
or the \textbf{intersection} of $M/F$ and $M/L$ is $M/\overline{F\cup L}$.
\begin{rem}
In an ambient space $\eta(M)$, the existence of a line passing through
two points fails, but the uniqueness of such a line holds.
\end{rem}

The following simple statement is the matroidal version of \textbf{Bézout's
theorem}.
\begin{prop}[Bézout's theorem for matroids]
For a matroid $M$, $m$ subspaces $M/F_{1},\dots,M/F_{m}$ meet
at a point if and only if 
\[
r(F_{1}\cup\cdots\cup F_{m})=r(M)-1.
\]
\end{prop}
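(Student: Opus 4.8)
The statement to prove is Bézout's theorem for matroids: subspaces $M/F_1,\dots,M/F_m$ meet at a point if and only if $r(F_1\cup\cdots\cup F_m)=r(M)-1$. The plan is to unwind the definitions. By definition, the intersection of $M/F_i$ and $M/F_j$ is $M/\overline{F_i\cup F_j}$, and more generally the meet of all $M/F_i$ in the lattice of subspaces is $M/\overline{F_1\cup\cdots\cup F_m}$, because the lattice of subspaces of $\eta(M)$ is (anti-)isomorphic to the geometric lattice of $M\backslash\overline{\emptyset}$ and joins of flats correspond to intersections of subspaces. So ``$M/F_1,\dots,M/F_m$ meet at a point'' means precisely that $M/\overline{F_1\cup\cdots\cup F_m}$ is a point, i.e.\ has subspace dimension $0$.

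First I would recall that the subspace dimension of $\eta(M/F)$ is $r(M/F)-1$, and that $r(M/F)=r(M)-r(F)$ for a flat $F$ (standard for contraction by a flat, since $r_{M/F}(X)=r_M(X\cup F)-r_M(F)$). Hence $\dim \eta(M/G)=0$ is equivalent to $r(M)-r(G)=1$, i.e.\ $r(G)=r(M)-1$, where $G=\overline{F_1\cup\cdots\cup F_m}$. Then the only remaining point is that closure does not change rank: $r\bigl(\overline{F_1\cup\cdots\cup F_m}\bigr)=r(F_1\cup\cdots\cup F_m)$, which is immediate from the definition of the closure operator. Chaining these equalities, ``meet at a point'' $\iff$ $\dim\eta(M/G)=0$ $\iff$ $r(G)=r(M)-1$ $\iff$ $r(F_1\cup\cdots\cup F_m)=r(M)-1$, which is exactly the claim.

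The main thing to be careful about — more a matter of bookkeeping than a genuine obstacle — is the direction of the lattice anti-isomorphism: one must check that the \emph{meet} (greatest lower bound) of the subspaces $M/F_i$ in the subspace lattice corresponds to the \emph{join} $\bigvee_i F_i=\overline{\bigcup_i F_i}$ of the flats $F_i$ in $\mathcal{L}(M)$, and that ``being a point'' (the bottom-dimensional nonempty subspace, dimension $0$) corresponds to $\bigvee_i F_i$ being a coatom of $\mathcal{L}(M)$, i.e.\ a corank-$1$ flat. This is consistent with the remark immediately preceding the statement and with the earlier definition that $M/F$ and $M/L$ meet in $M/\overline{F\cup L}$; once those identifications are pinned down the proof is a one-line computation with rank functions. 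If loops are present one works throughout with $M\backslash\overline{\emptyset}$, noting that contracting the flats $F_i$ and deleting loops commute appropriately and that $\overline{\emptyset}\subseteq F_i$ for every flat $F_i$, so the loops contribute nothing to either side of the equivalence.
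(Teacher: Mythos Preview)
Your argument is correct: the proposition is a direct unwinding of the definitions introduced just before it in the paper (the intersection of subspaces $M/F$ and $M/L$ is $M/\overline{F\cup L}$, and the subspace dimension of $\eta(M/F)$ is $r(M/F)-1=r(M)-r(F)-1$), together with the standard fact that closure preserves rank. The paper itself states this proposition without proof, treating it as an immediate consequence of the definitions; your write-up supplies exactly the routine verification the paper omits, and the care you take with the lattice anti-isomorphism and with loops is appropriate but not strictly needed here.
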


\subsection*{Vámos and Anti-Vámos Line Arrangements}

Let $M$ be a matroid of rank $\ge3$ and consider an arrangement
of four lines for which no two of them are coplanar and no three of
them meet at a point. Suppose that five pairs of those four lines
have nonempty intersection. If the other pair also has nonempty intersection,
we call such an arrangement of four lines an \textbf{anti-Vámos line
arrangement}, see Figure \ref{fig:Non-Vamos}. 
\begin{figure}[th]
\begin{spacing}{0.4}
\noindent \begin{centering}
\noindent \begin{center}
\begin{tikzpicture}[font=\scriptsize]

\begin{scope}[line cap=round,scale=0.9]

 \path (0,0)--++(-30:2) coordinate (A0);
 \path (0,0)--++(90:2) coordinate (A1);
 \path (0,0)--++(150:1) coordinate (A2);
 \path (0,0)--++(210:2) coordinate (A3);
 \path (0,0)--++(270:1) coordinate (A4);

 \draw [thick] (A1)++(60:1)--(A3)--++(240:0.9) node[right]{};
 \draw [thick] (A3)++(180:1)--(A0)--++(0:1) node[above right=-2pt]{};
 \draw [thick] (A1)++(90:1)--(A4)--++(270:0.9);
 \path (A4)++(270:0.75) node[right]{};
 \draw [thick] (A2)++(150:1)--(A0)--++(-30:1) node[below right=-2pt]{};

 \foreach \x in {(A0),(A1),(A2),(A3),(A4),(0,0)}{
    \fill [black] \x circle (2.5pt);}

\end{scope}

\end{tikzpicture}
\par\end{center}
\par\end{centering}
\end{spacing}
\begin{spacing}{0.3}
\noindent \caption{\label{fig:Non-Vamos}An Anti-Vámos Line Arrangement}
\end{spacing}
\end{figure}
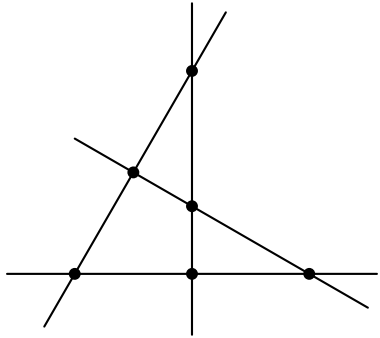
 But, this is not true in general.

Indeed, let $M$ be a Vámos matroid, which has rank $4$, on $\left[8\right]=\left\{ 1,\dots,8\right\} $
with $T_{0}=\left\{ 1,2\right\} $, $T_{1}=\left\{ 3,4\right\} $,
$L_{0}=\left\{ 5,6\right\} $, and $L_{1}=\left\{ 7,8\right\} $ such
that $T_{0}\cup T_{1}$ and $T_{i}\cup L_{j}$ with $i,j\in\left\{ 1,2\right\} $
are the five circuits of size $4$ which are also rank-$3$ flats,
then $r(L_{0}\cup L_{1})=4$. Let $t_{i}=M/T_{i}$ and $l_{i}=M/L_{i}$
for $i=0,1$, then this arrangement of four lines $\left\{ t_{0},t_{1},l_{0},l_{1}\right\} $
is a counterexample, see Figure \ref{fig:Vamos0}. We call such an
arrangement of four lines a \textbf{Vámos line arrangement}.\footnote{For a rank-$4$ matroid, the property of having no Vámos line arrangement
is equivalent to the bundle condition of \cite{BK88}.} 
\begin{figure}[th]
\begin{spacing}{0.4}
\noindent \begin{centering}
\noindent \begin{center}
\begin{tikzpicture}[font=\scriptsize]

\begin{scope}[line cap=round,scale=0.9]

 \path (0,0)--++(-30:2) coordinate (A0);
 \path (0,0)--++(90:2) coordinate (A1);
 \path (0,0)--++(150:1) coordinate (A2);
 \path (0,0)--++(210:2) coordinate (A3);
 \path (0,0)--++(270:1) coordinate (A4);

 \draw [thick] (A1)++(60:1)--(A3)--++(240:0.9) node[right]{$t_{1}$};
 \draw [thick] (A3)++(180:1)--(A0)--++(0:1) node[above right=-2pt]{$l_{1}$};
 \draw [thick] (A1)++(90:1)--(A4)--++(270:0.9);
 \path (A4)++(270:0.75) node[right]{$t_{0}$};
 \draw [thick] (A2)++(150:1)--(A0)--++(-30:1) node[below right=-2pt]{$l_{0}$};

 \foreach \x in {(A1),(A2),(A3),(A4),(0,0)}{
    \fill [black] \x circle (2.5pt);}

\end{scope}

\end{tikzpicture}
\par\end{center}
\par\end{centering}
\end{spacing}
\begin{spacing}{0.3}
\noindent \caption{\label{fig:Vamos0}A Vámos Line Arrangement}
\end{spacing}
\end{figure}
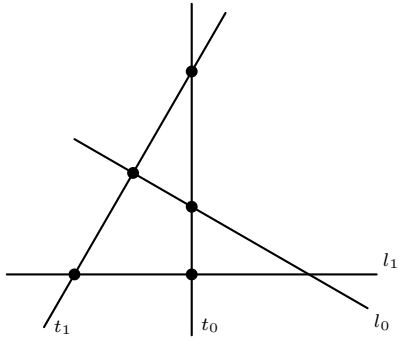

Thus, dealing with a quadruple of lines is a delicate task. But, a
triple of lines conforms to our usual geometric sense.
\begin{lem}
\label{lem:triple-lines}Let $M$ be a rank-$4$ matroid. Suppose
$M$ has three lines having empty intersection such that any two of
them meet at a point.
\begin{enumerate}
\item If a plane contains two of those three lines, it also contains the
other line.
\item If a plane contains a line of the three lines and the intersection
point of the other two, it contains all three lines.
\end{enumerate}
\end{lem}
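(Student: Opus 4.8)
Geometrically, both parts are the elementary fact that three pairwise-meeting lines with no common point lie in a common plane, and that this plane is pinned down; the only real content is making this rigorous in the flat lattice, where ``line'', ``plane'' and ``point'' correspond to flats of rank $2$, $1$ and $3$. So the plan is: write the three lines as $M/F_1,M/F_2,M/F_3$ with each $F_i$ a rank-$2$ flat; then ``any two meet at a point'' says $r(\overline{F_i\cup F_j})=3$ for $i\neq j$, and ``empty intersection'' says $r(\overline{F_1\cup F_2\cup F_3})=4$. A plane is $M/G$ for a rank-$1$ flat $G$; it contains the line $M/F$ iff $G\subseteq F$, and it contains the point $M/\overline{F_i\cup F_j}$ iff $G\subseteq\overline{F_i\cup F_j}$. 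In particular, since $F_i\subseteq\overline{F_i\cup F_j}$, a plane through the line $M/F_i$ passes through both points $M/\overline{F_i\cup F_j}$ lying on it.

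The crux, and the one step I expect to require care, is the matroidal version of ``a line is determined by two of its points'': for each $k$, writing $\{i,j\}=\{1,2,3\}\setminus\{k\}$, one has $F_k=\overline{F_i\cup F_k}\cap\overline{F_j\cup F_k}$. The inclusion $\subseteq$ is clear. For $\supseteq$, let $J:=\overline{F_i\cup F_k}\cap\overline{F_j\cup F_k}$, a flat with $F_k\subseteq J$, so $r(J)\geq 2$. If $r(J)\geq 3$, then $J$ is a flat of rank $\geq 3$ inside the rank-$3$ flat $\overline{F_i\cup F_k}$, hence equal to it, and likewise $J=\overline{F_j\cup F_k}$; then $P:=\overline{F_i\cup F_k}=\overline{F_j\cup F_k}$ contains all three of $F_i,F_j,F_k$, so $r(\overline{F_1\cup F_2\cup F_3})\leq r(P)=3$, contradicting the empty-intersection hypothesis. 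Hence $r(J)=2$, and as $F_k\subseteq J$ are both rank-$2$ flats, $F_k=J$. The empty-intersection hypothesis enters exactly here: it prevents the three pairwise intersection points from collapsing into one.

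From this identity both parts are mechanical. For (1): a plane $M/G$ containing $M/F_1$ and $M/F_2$ has $G\subseteq F_1\subseteq\overline{F_1\cup F_3}$ and $G\subseteq F_2\subseteq\overline{F_2\cup F_3}$, hence $G\subseteq\overline{F_1\cup F_3}\cap\overline{F_2\cup F_3}=F_3$ (the identity with $k=3$), so $M/G$ contains $M/F_3$. For (2): if a plane $M/G$ contains $M/F_3$ and the point $M/\overline{F_1\cup F_2}$, then $G\subseteq F_3\subseteq\overline{F_1\cup F_3}$ together with $G\subseteq\overline{F_1\cup F_2}$ gives $G\subseteq\overline{F_1\cup F_2}\cap\overline{F_1\cup F_3}=F_1$ (the identity with $k=1$), and symmetrically $G\subseteq\overline{F_1\cup F_2}\cap\overline{F_2\cup F_3}=F_2$; with $G\subseteq F_3$ this shows $M/G$ contains all three lines. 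The whole difficulty is in isolating and proving the reconstruction identity above; the rest is bookkeeping in $\mathcal{L}(M)$.
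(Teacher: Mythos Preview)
Your proof is correct and follows essentially the same line as the paper's. The key identity you isolate, $F_k=\overline{F_i\cup F_k}\cap\overline{F_j\cup F_k}$, is exactly what the paper uses in part~(2) (stated there as $T_i=\overline{T_1\cup T_2}\cap\overline{T_3\cup T_i}$, without proof); your rank argument for it is equivalent to the submodularity computation the paper carries out in part~(1). The only cosmetic difference is that the paper handles (1) by showing directly that $T_1\cap T_2\subseteq T_3$ via $r\bigl((T_1\cup T_3)\cap(T_2\cup T_3)\bigr)\le 2$, whereas you route (1) through the same identity you use for (2); your presentation is slightly more uniform, but the mathematical content is the same.
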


\begin{proof}
Let $t_{i}=M/T_{i}$ for $i=1,2,3$ be those three lines where $T_{1},T_{2},T_{3}$
are rank-$2$ flats of $M$ with $r(T_{1}\cup T_{2}\cup T_{3})=4$
and $r(T_{i}\cup T_{j})=3$ for $i\neq j$.

(1) Suppose that $t_{1}$ and $t_{2}$ are contained in a plane $H$.
Then, $T_{1}\cap T_{2}$ is a rank-$1$ flat and $H=M/(T_{1}\cap T_{2})$.
Now, $\left(T_{1}\cap T_{2}\right)\cup T_{3}=\left(T_{1}\cup T_{3}\right)\cap\left(T_{2}\cup T_{3}\right)$
has rank at least $2$, and moreover at most $2$ since by submodularity
of rank function, 
\[
r(\left(T_{1}\cup T_{3}\right)\cap\left(T_{2}\cup T_{3}\right))\le r(T_{1}\cup T_{3})+r(T_{2}\cup T_{3})-r(T_{1}\cup T_{2}\cup T_{3})=2.
\]
Therefore $T_{1}\cap T_{2}\subset T_{3}$, and the plane $H$ contains
the other line $t_{3}$.

(2) Suppose that a plane $H$ contains $M/\overline{T_{1}\cup T_{2}}$
and $t_{3}$ without loss of generality, then $\overline{T_{1}\cup T_{2}}\cap T_{3}$
is a rank-$1$ flat and so $H=M/(\overline{T_{1}\cup T_{2}}\cap T_{3})$.
For each $i=1,2$, we have $T_{i}=\overline{T_{1}\cup T_{2}}\cap\overline{T_{3}\cup T_{i}}$
which contains $\overline{T_{1}\cup T_{2}}\cap T_{3}$. Thus $H$
contains both $t_{1}$ and $t_{2}$ as well as $t_{3}$. 
\end{proof}

\section{\label{sec:Modular}Kantor's Conjecture and the Sticky Matroid Conjecture}

Let $M$ be a matroid of rank $\ge3$. We may assume $M$ is loopless
for convenience. Note that extending the matroid $M$ by adding an
element that is not a coloop nor a loop is the same as adding a new
hyperplane to $M$. If $M$ is hypermodular, any two points of the
$\left(r(M)-1\right)$-dimensional projective space $M$ lie on a
(unique) line of $M$, and vice versa. Proposition \ref{prop:equiv}
has the following translation.
\begin{prop}
\label{prop:Equiv}Let $M$ be a hypermodular matroid of rank $4$.
Then, $M$ is modular if and only if any point and line of $M$ lie
on a plane, if and only if any two lines intersecting at a point lie
on a plane.
\end{prop}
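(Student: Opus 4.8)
The plan is to read off both equivalences from Proposition~\ref{prop:equiv} via the subspace dictionary of Section~\ref{sec:Vamos}. Since the modularity of $M$ and the subspace poset of $\eta(M)$ depend only on $M\backslash\overline{\emptyset}$, I may (and will) assume $M$ loopless, so that Proposition~\ref{prop:equiv} applies directly. Under the dictionary, a flat $F$ with $r(F)=1,2,3$ corresponds respectively to a plane, a line, and a point of $\eta(M)$, with containment of flats reversed; and, $M$ being loopless, a flat $G$ is nonempty if and only if $r(G)\ge 1$, in which case $G$ contains a rank-$1$ flat.

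The one incidence fact I would isolate first is this: for flats $F,L$, a plane $M/G$ (that is, $r(G)=1$) contains both $M/F$ and $M/L$ if and only if $G\subseteq F\cap L$, and such a $G$ exists if and only if $r(F\cap L)\ge 1$, i.e.\ if and only if $F\cap L\neq\emptyset$. Thus two subspaces of $\eta(M)$ corresponding to flats $F$ and $L$ lie on a common plane exactly when $F$ and $L$ are not disjoint.

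With this in hand the first equivalence is immediate: a point $M/F$ and a line $M/L$ (so $r(F)=3$, $r(L)=2$) lie on a common plane if and only if $F\cap L\neq\emptyset$, so ``every point and line of $M$ lie on a plane'' is, verbatim, ``$M$ has no two disjoint flats of rank $3$ and $2$'', which is equivalent to the modularity of $M$ by Proposition~\ref{prop:equiv}. For the second equivalence, two lines $M/L_{1}$ and $M/L_{2}$ intersect at a point if and only if $r(L_{1}\cup L_{2})=r(M)-1=3$ by B\'ezout's theorem for matroids; this forces $L_{1}\neq L_{2}$, and then $\overline{L_{1}\cup L_{2}}$ is the unique rank-$3$ flat containing both, while by the incidence fact the two lines lie on a common plane if and only if $L_{1}\cap L_{2}\neq\emptyset$. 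Since a rank-$3$ flat containing two distinct rank-$2$ flats $L_{1},L_{2}$ is precisely $\overline{L_{1}\cup L_{2}}$ with $r(L_{1}\cup L_{2})=3$, the statement ``every two lines intersecting at a point lie on a plane'' is, verbatim, ``no rank-$3$ flat contains two disjoint rank-$2$ flats'', again equivalent to the modularity of $M$ by Proposition~\ref{prop:equiv}.

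Since this is a translation, I do not expect a substantive obstacle; the only points needing care are the degenerate configurations --- a point incident to the line, or two coincident lines --- which the criterion $F\cap L\neq\emptyset$ handles uniformly, and the elementary fact that in a loopless matroid every flat of positive rank contains a rank-$1$ flat, which is exactly what makes ``lying on a common plane'' equivalent to the nonemptiness of the intersection of the defining flats rather than merely to its having positive rank.
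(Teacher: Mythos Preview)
Your proposal is correct and follows exactly the paper's intended route: the paper presents Proposition~\ref{prop:Equiv} simply as ``the following translation'' of Proposition~\ref{prop:equiv} via the subspace dictionary of Section~\ref{sec:Vamos}, and you have carefully spelled out that translation, including the incidence criterion $F\cap L\neq\emptyset$ and the use of B\'ezout's theorem to identify ``two lines meeting at a point'' with $r(L_{1}\cup L_{2})=3$. Your handling of the degenerate cases and the reduction to the loopless setting is more explicit than anything the paper writes down, but there is no difference in approach.
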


\begin{notation}
For $\mathcal{X}\subseteq\mathcal{L}(M)$, we will say $M/X$ with
$X\in\mathcal{X}$ is from $\mathcal{X}$.
\end{notation}

Let $M$ be a rank-$4$ loopless hypermodular matroid that is non-modular.
Then, by Proposition \ref{prop:equiv}, the matroid $M$ has two disjoint
flats $F$ and $L$ of rank $3$ and $2$, respectively, and $\left\{ F,L\right\} $
is a non-modular pair by Proposition \ref{prop:non-mod pair}. Let
$A_{1},\dots,A_{n}$ be all rank-$3$ flats containing $L$, then
$n\ge3$ by Lemma \ref{lem:Disjoint0}, and 
\[
\mathcal{T}_{0}:=\left\{ L\right\} \cup\left\{ F\cap A_{i}:i\in\left[n\right]\right\} \neq\emptyset
\]
 is a set of corank-$2$ flats by hypermodularity of $M$; see Figure
\ref{fig:Vamos1} for the lines $M/L$ and $M/(F\cap A_{i})$ and
their intersection points $M/F$ and $M/A_{i}$. By \ref{enu:(F2')},
any two rank-$2$ flats $F\cap A_{i}$ and $F\cap A_{j}$ are disjoint
and their union has rank $r(F)=3$, and hence the pair of them is
non-modular. Thus, the modular cut generated by $\left\{ F,L\right\} $
contains a non-modular pair of rank-$2$ flats.
\begin{figure}[th]
\begin{spacing}{0.4}
\noindent \begin{centering}
\noindent \begin{center}
\begin{tikzpicture}[font=\scriptsize]

\begin{scope}[line cap=round,scale=0.9]

 \path (0,0)--++(-30:2) coordinate (A0);
 \path (0,0)--++(90:2) coordinate (A1);
 \path (0,0)--++(150:1) coordinate (A2);
 \path (0,0)--++(210:2) coordinate (A3);
 \path (0,0)--++(270:1) coordinate (A4);

 \draw [thick] (A1)++(60:1)--(A3)--++(240:0.9) node[below=-2pt]{$_{M/(F\cap A_1)}$};
 \draw [thick] (A1)++(120:1)--(A0)--++(300:0.9) node[below=-2pt]{$_{M/(F\cap A_n)}$};
 \draw [thick] (A3)++(180:1)--(A0)--++(0:1) node[below right=-4pt]{$_{M/L}$};
 \draw [thick] (A1)++(90:1)--(A4)--++(270:0.9);
 \path (A4)++(270:0.75) node[below=2pt]{$_{M/(F\cap A_i)}$};

 \foreach \x in {(A0),(A1),(A3),(A4)}{
    \fill [black] \x circle (2.5pt);}

\path (A1) node[right=1pt]{$_{M/F}$};
\path (A3)++(140:.4) node[]{$_{M/A_1}$};
\path (A4)++(30:.5) node[]{$_{M/A_i}$};
\path (A0)++(40:.4) node[]{$_{M/A_n}$};

\end{scope}

\end{tikzpicture}
\par\end{center}
\par\end{centering}
\end{spacing}
\begin{spacing}{0.3}
\noindent \centering{}\caption{\label{fig:Vamos1}The Lines from $\mathcal{T}_{0}$ and Their Intersection
Points.}
\end{spacing}
\end{figure}
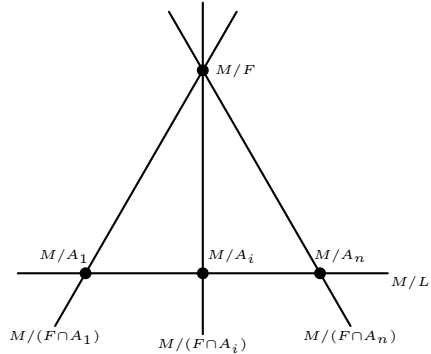

Also, $M$ has a non-modular pair $\left\{ T,T'\right\} $ of disjoint
rank-$2$ flats with $r(T\cup T')=3$ by Propositions \ref{prop:equiv}
and \ref{prop:non-mod pair}. Let $X$ and $X'$ be rank-$3$ flats
other than $\overline{T\cup T'}$ that contain $T$ and $T'$, respectively.
Then, $X\cap X'$ is a rank-$2$ flat which is disjoint from $\overline{T\cup T'}$
by Lemma \ref{lem:triple-lines}, and $\left\{ \overline{T\cup T'},X\cap X'\right\} $
is non-modular. Thus, the modular cut generated by $\left\{ T,T'\right\} $
contains a non-modular pair of flats of rank $3$ and $2$.
\begin{prop}
\label{prop:gen_nonmod}Let $M$ be a rank-$4$ loopless hypermodular
matroid that is non-modular, and $\mathcal{M}$ be a modular cut.
If $\mathcal{M}$ has a non-modular pair of rank-$2$ flats, it has
a non-modular pair of flats of rank $3$ and $2$ such that both non-modular
pairs generate the same modular cut, and vice versa.
\end{prop}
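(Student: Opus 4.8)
The plan is to prove only the ``same modular cut'' clause, since the two constructions and the non-modularity of the pairs they produce have already been carried out in the paragraphs preceding the statement. Throughout I would write $\langle X,Y\rangle$ for the modular cut of $M$ generated by a pair $\{X,Y\}$ of flats, and I would repeatedly use two facts about it: it is upward closed, so every flat containing $X$ or $Y$ belongs to it; and it is closed under intersections of modular pairs of its members. I also note that since $\mathcal{M}$ is a modular cut containing a given pair, it contains the entire modular cut that pair generates, so any non-modular pair I exhibit inside that generated cut automatically lies in $\mathcal{M}$.

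\emph{From a rank-$3$/rank-$2$ pair to a rank-$2$ pair.} Suppose $\{F,L\}\subseteq\mathcal{M}$ is non-modular with $r(F)=3$ and $r(L)=2$; then $F\cap L=\emptyset$ by Proposition \ref{prop:non-mod pair}. Letting $A_1,\dots,A_n$ be all rank-$3$ flats containing $L$, the discussion gives (using Lemma \ref{lem:Disjoint0} and \ref{enu:(F2')}) that $n\ge3$, that the $F\cap A_i$ are pairwise disjoint rank-$2$ flats, and that $\{F\cap A_1,F\cap A_2\}$ is a non-modular pair lying in $\langle F,L\rangle$; hence $\langle F\cap A_1,F\cap A_2\rangle\subseteq\langle F,L\rangle$. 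For the reverse I would note that $F$, $A_1$, and $A_2$ each contain one of $F\cap A_1$, $F\cap A_2$, so all three lie in $\langle F\cap A_1,F\cap A_2\rangle$ by upward closure; since $A_1\ne A_2$ are corank-$1$ flats, hypermodularity makes $\{A_1,A_2\}$ a modular pair with $r(A_1\cap A_2)=2$, forcing $A_1\cap A_2=L$ because $L\subseteq A_1\cap A_2$. Thus $L\in\langle F\cap A_1,F\cap A_2\rangle$, and $\langle F,L\rangle\subseteq\langle F\cap A_1,F\cap A_2\rangle$.

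\emph{From a rank-$2$ pair to a rank-$3$/rank-$2$ pair.} Suppose $\{T,T'\}\subseteq\mathcal{M}$ is non-modular with $r(T)=r(T')=2$; then $T\cap T'=\emptyset$ and $P:=\overline{T\cup T'}$ has rank $3$ by Proposition \ref{prop:non-mod pair}. Each of $T$, $T'$ lies in at least two rank-$3$ flats (those through it partition the complement, by \ref{enu:(F2')}), so I can choose rank-$3$ flats $X\ne P$ through $T$ and $X'\ne P$ through $T'$; here $X\ne X'$, since $X=X'$ would force $X\supseteq\overline{T\cup T'}=P$. Then $\{X,X'\}$ is modular with $r(X\cap X')=2$ by hypermodularity, and the preceding discussion (via Lemma \ref{lem:triple-lines}) gives that $X\cap X'$ is disjoint from $P$ and $\{P,X\cap X'\}$ is non-modular. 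As $X$, $X'$, and $P$ all contain one of $T$, $T'$, they lie in $\langle T,T'\rangle$ by upward closure, hence so does $X\cap X'$, and $\langle P,X\cap X'\rangle\subseteq\langle T,T'\rangle$. Conversely $X$ and $X'$ each contain $X\cap X'$, so both lie in $\langle P,X\cap X'\rangle$; hypermodularity makes $\{P,X\}$ and $\{P,X'\}$ modular pairs whose intersections $P\cap X$ and $P\cap X'$ have rank $2$, whence $P\cap X=T$ and $P\cap X'=T'$ as $T\subseteq P\cap X$ and $T'\subseteq P\cap X'$. Thus $T,T'\in\langle P,X\cap X'\rangle$ and $\langle T,T'\rangle\subseteq\langle P,X\cap X'\rangle$.

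The genuinely load-bearing point in both directions is the appeal to hypermodularity that makes the relevant pairs of rank-$3$ flats --- $\{A_1,A_2\}$ in the first case, $\{P,X\}$ and $\{P,X'\}$ in the second --- modular pairs with two-dimensional intersections: it is exactly the closure of a modular cut under intersections of modular pairs, applied to those pairs, that recovers the ``old'' flats ($L$; resp.\ $T$ and $T'$) inside the modular cut generated by the freshly produced pair. The rest is bookkeeping, the one point worth double-checking being $X\ne X'$, so that $r(X\cap X')=2$ rather than $3$.
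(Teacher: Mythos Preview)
Your proof is correct and follows essentially the same construction as the paper: in one direction you pass from $\{T,T'\}$ to $\{P,X\cap X'\}$ with $P=\overline{T\cup T'}$, and in the other from $\{F,L\}$ to $\{F\cap A_i,F\cap A_j\}$. The paper simply asserts the ``same modular cut'' claim \emph{by construction}, whereas you spell out both inclusions explicitly via upward closure and closure under modular-pair intersections---this is more thorough than the paper but not a different argument.
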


\begin{proof}
Assume the above setting. Let $\left\{ T,T'\right\} $ be a non-modular
pair of rank-$2$ flats of $\mathcal{M}$. Then, all three rank-$3$
flats $F:=\overline{T\cup T'}$, $X$ and $X'$ are contained in $\mathcal{M}$
because $\mathcal{M}$ is a modular cut, and $L:=X\cap X'$ is a rank-$2$
flat in $\mathcal{M}$ since $\left\{ X,X'\right\} $ is modular by
hypermodularity of $M$. Thus, $\left\{ F,L\right\} $ is a non-modular
pair of $\mathcal{M}$, and by construction, $\left\{ F,L\right\} $
and $\left\{ T,T'\right\} $ generate the same modular cut.

Conversely, let $\left\{ F,L\right\} $ be a non-modular pair of $\mathcal{M}$
with $r(F)=3$ and $r(L)=2$. Any two rank-$3$ flats $A_{i}$ and
$A_{j}$ contain $L$ and they are elements of $\mathcal{M}$. So,
two rank-$2$ flats $T:=F\cap A_{i}$ and $T':=F\cap A_{j}$ are contained
in $\mathcal{M}$, and $\left\{ T,T'\right\} $ is a non-modular pair
of $\mathcal{M}$ which generates the same modular cut as $\left\{ F,L\right\} $.
\end{proof}
\begin{lem}
\label{lem:non-principal}Let $M$ be a rank-$4$ loopless hypermodular
but non-modular matroid, and $\mathcal{M}$ a modular cut with a non-modular
pair. The following are equivalent.
\begin{enumerate}
\item \label{enu:proper}$\mathcal{M}$ is a proper modular cut.
\item \label{enu:non-principal}$\mathcal{M}$ is a non-principal modular
cut generated by any non-modular pair in it.
\item \label{enu:no-rank-1}$\mathcal{M}$ has no flat of rank $\le1$.
\item \label{enu:partition}The set $\mathcal{T}$ of rank-$2$ flats in
$\mathcal{M}$ is a partition of $E(M)$.
\end{enumerate}
\end{lem}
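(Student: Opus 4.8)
The plan is to run the cycle $(1)\Rightarrow(3)\Rightarrow(4)\Rightarrow(1)$ and then establish $(1)\Leftrightarrow(2)$ on top of it. Throughout, by Proposition~\ref{prop:gen_nonmod} I may fix in $\mathcal{M}$ a non-modular pair $\{F,L\}$ with $r(F)=3$, $r(L)=2$ and $F\cap L=\emptyset$; let $A_{1},\dots,A_{n}$ be all rank-$3$ flats containing $L$, so $n\ge 3$ by Lemma~\ref{lem:Disjoint0}, each $A_{i}\in\mathcal{M}$ by upward closure, and each $F\cap A_{i}\in\mathcal{M}$ is a rank-$2$ flat since $\{F,A_{i}\}$ is a modular pair of corank-$1$ flats. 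Two facts I will reuse: by~\ref{enu:(F2')}, $A_{i}\cap A_{j}=L$ for $i\ne j$, so the $F\cap A_{i}$ are pairwise disjoint; and every point of $F$ lies in some $F\cap A_{i}$, because for $y\in F$ (hence $y\notin L$) the flat $\overline{L\cup\{y\}}$ covers $L$, so it equals one of the $A_{i}$.

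First I would dispatch the easy arrows. For $(1)\Rightarrow(3)$: a rank-$1$ flat $p\in\mathcal{M}$ must be disjoint from $F$ or from $L$ (it cannot lie in both, as $F\cap L=\emptyset$), and in either case the corresponding pair is modular by a one-line rank count, so $\overline{\emptyset}_{M}\in\mathcal{M}$, against $(1)$; rank $0$ is excluded by $(1)$ outright. For $(4)\Rightarrow(1)$: if $\overline{\emptyset}_{M}\in\mathcal{M}$ then $\mathcal{M}=\mathcal{L}(M)$, so its rank-$2$ flats are \emph{all} rank-$2$ flats of $M$, which do not partition $E(M)$ since some point lies on two of them (by~\ref{enu:(F2')}, a point with a unique rank-$2$ flat through it would force that flat to be $E(M)$). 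For $(2)\Rightarrow(1)$: if $\overline{\emptyset}_{M}\in\mathcal{M}$ then again $\mathcal{M}=\mathcal{L}(M)$, the principal modular cut generated by $\overline{\emptyset}_{M}$, contradicting non-principality.

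The substantive step is $(3)\Rightarrow(4)$, and this is where I expect the real work. Pairwise disjointness of the rank-$2$ flats in $\mathcal{M}$ follows from $(3)$: two distinct rank-$2$ flats meeting in a point are, by submodularity, a modular pair, so $\mathcal{M}$ would contain their rank-$1$ meet. For the covering, the flats $L$ and $F\cap A_{1},\dots,F\cap A_{n}$ already account for every point of $L\cup F$. Given $x\in E(M)\setminus(L\cup F)$, the key construction is: let $A_{i}:=\overline{L\cup\{x\}}$ (one of our rank-$3$ flats, since $x\notin L$), pick any $j\ne i$, and form the rank-$3$ flat $C:=\overline{(F\cap A_{j})\cup\{x\}}$ (rank $3$ because $x\notin F$). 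Then $C\in\mathcal{M}$ as it contains $F\cap A_{j}$, and $C\ne A_{i}$, for otherwise $F\cap A_{j}\subseteq A_{i}\cap A_{j}=L$, which is impossible since $F\cap A_{j}$ is disjoint from $L$; hence $\{A_{i},C\}$ is a modular pair of corank-$1$ flats inside $\mathcal{M}$, so $A_{i}\cap C$ is a rank-$2$ flat of $\mathcal{M}$, and it contains $x$. Producing such a flat through an arbitrary $x$ is the crux of the whole lemma; I note that the covering part in fact works for any modular cut with a non-modular pair, and it is only pairwise disjointness that consumes hypothesis $(3)$.

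Finally $(1)\Rightarrow(2)$. Non-principality is immediate: a principal modular cut generated by a flat $G$ and containing a disjoint pair forces $G=\overline{\emptyset}_{M}$, hence $\mathcal{M}=\mathcal{L}(M)$, against $(1)$. For the generation claim, let $\{P,Q\}$ be any non-modular pair in $\mathcal{M}$ and put $\mathcal{M}':=\mathrm{MC}(\{P,Q\})\subseteq\mathcal{M}$; then $\mathcal{M}'$ is again a proper modular cut with a non-modular pair, so by the cycle already proved both $\mathcal{M}$ and $\mathcal{M}'$ have their rank-$2$ flats partitioning $E(M)$. Since the rank-$2$ flats of $\mathcal{M}'$ form a subfamily of those of $\mathcal{M}$ and both families partition $E(M)$, the two families coincide. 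Every rank-$3$ flat $A\in\mathcal{M}$ contains one of them — take $A\cap F$ if $A\ne F$, and $A\cap A_{1}$ if $A=F$, both being rank-$2$ flats of $\mathcal{M}$ by hypermodularity — hence $A\in\mathcal{M}'$. As neither cut has a flat of rank $\le 1$ and both contain $E(M)$, we conclude $\mathcal{M}=\mathcal{M}'$. The places I would watch are the routine verifications that the flats built in the covering and generation steps really have the claimed ranks and really lie in the cuts; beyond those, everything reduces to the modular-cut axioms and hypermodularity via Proposition~\ref{prop:non-mod pair}.
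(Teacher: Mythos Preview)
Your proof is correct and follows essentially the same strategy as the paper's: the implications $(1)\Leftrightarrow(3)$, the partition argument for $(3)\Rightarrow(4)$ via intersecting with rank-$3$ flats through $L$ and through some $F\cap A_j$, and the comparison of two partitions for $(1)\Rightarrow(2)$ are all the same ideas. The only cosmetic differences are that you close the cycle via $(4)\Rightarrow(1)$ directly rather than $(4)\Rightarrow(3)$, and in $(1)\Rightarrow(2)$ you verify $\mathcal{M}=\mathcal{M}'$ rank-by-rank, whereas the paper writes the terser $\mathcal{M}=\mathrm{MC}(\mathcal{T})=\mathrm{MC}(\mathcal{T}')=\mathcal{M}'$ (which in fact tacitly relies on exactly the observation you spell out, that every rank-$3$ flat of $\mathcal{M}$ contains a rank-$2$ flat of $\mathcal{T}$).
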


\begin{proof}
To show (\ref{enu:proper})$\Rightarrow$(\ref{enu:no-rank-1}), suppose
the proper modular cut $\mathcal{M}$ has a rank-$1$ flat $J$. Let
$\left\{ X,Y\right\} $ be a non-modular pair in $\mathcal{M}$, then
we have $X\cap J=\emptyset$ or $Y\cap J=\emptyset$, and therefore
$\emptyset\in\mathcal{M}$ since $J$ is a modular flat, which is
a contradiction. Hence, $\mathcal{M}$ has no flat of rank $\le1$.
Since (\ref{enu:no-rank-1})$\Rightarrow$(\ref{enu:proper}) is obvious,
we have (\ref{enu:proper})$\Leftrightarrow$(\ref{enu:no-rank-1}).

Suppose (\ref{enu:partition}). By hypermodularity of $M$, every
flat of rank $\le1$ is the intersection of two rank-$2$ flats. If
$J$ is a rank-$1$ flat in $\mathcal{M}$, since $\mathcal{M}$ is
a modular cut, $J$ is the intersection of two rank-$2$ flats in
$\mathcal{M}$ which is $\emptyset$, a contradiction. So, there is
no rank-$1$ flat in $\mathcal{M}$ and $\emptyset\notin\mathcal{M}$,
and hence (\ref{enu:no-rank-1}).

Suppose (\ref{enu:no-rank-1}). Any two rank-$2$ flats in $\mathcal{M}$
are disjoint. By Proposition \ref{prop:gen_nonmod}, we may assume
that $\mathcal{M}$ has a non-modular pair $\left\{ F,L\right\} $
with $r(F)=3$ and $r(L)=2$, and assume the setting of Figure \ref{fig:Vamos1},
then $E(M)$ is partitioned into $L,A_{1}-L,\dots,A_{n}-L$ by \ref{enu:(F2')}.
Also, $F$ is partitioned into rank-$2$ flats $F\cap A_{1},\dots,F\cap A_{n}$.
Similarly, for any two $i,j\in\left[n\right]$, the pair $\left\{ A_{i},F\cap A_{j}\right\} $
is a non-modular pair by Lemma \ref{lem:triple-lines} and $A_{i}$
is partitioned into rank-$2$ flats $A_{i}\cap A'_{1},\dots,A_{i}\cap A'_{n'}$
where $A'_{1},\dots,A'_{n'}$ are all rank-$3$ flats that contain
$F\cap A_{j}$. These rank-$2$ flats are flats in $\mathcal{M}$.
Thus, $E(M)$ is partitioned into rank-$2$ flats in $\mathcal{M}$,
and (\ref{enu:no-rank-1})$\Leftrightarrow$(\ref{enu:partition})
is proved.

To show (\ref{enu:proper})$\Rightarrow$(\ref{enu:non-principal}),
suppose $\mathcal{M}$ is proper. Since $\mathcal{M}$ has at least
two rank-$2$ flats and no flat of rank $\le1$ by (\ref{enu:no-rank-1}),
$\mathcal{M}$ is a non-principal modular cut. Let $\mathcal{M}'$
be the modular cut generated by the above non-modular pair $\left\{ F,L\right\} $,
and $\mathcal{T}'$ be the set of rank-$2$ flats in $\mathcal{M}'$,
then $\mathcal{T}'\subseteq\mathcal{T}$. But, both $\mathcal{T}'$
and $\mathcal{T}$ are partitions of $E(M)$ by (\ref{enu:partition})
which implies $\mathcal{T}'=\mathcal{T}$. Therefore, $\mathcal{M}'=\mathrm{MC}(\mathcal{T}')=\mathrm{MC}(\mathcal{T})=\mathcal{M}$
and $\left\{ F,L\right\} $ generates $\mathcal{M}$. Thus, any non-modular
pair in $\mathcal{M}$ generates it by Proposition \ref{prop:gen_nonmod}.

Now, (\ref{enu:non-principal})$\Rightarrow$(\ref{enu:proper}) is
obvious, and (\ref{enu:proper})$\Leftrightarrow$(\ref{enu:non-principal})
is proved. The proof is complete.
\end{proof}
\begin{cor}
\label{cor:non-principal}Let $M$ be a rank-$4$ loopless hypermodular
but non-modular matroid. Then, every non-principal modular cut is
generated by a non-modular pair.
\end{cor}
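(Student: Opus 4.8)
The plan is to reduce the statement to Lemma~\ref{lem:non-principal}, whose hypothesis already requires the modular cut to contain a non-modular pair; thus the only new thing to prove is that a non-principal modular cut automatically contains such a pair. Let $\mathcal{M}$ be a non-principal modular cut of $M$; I may assume $\mathcal{M}\neq\emptyset$, the empty modular cut being a vacuous case for this statement. The first step is to observe that $\mathcal{M}$ cannot have a unique minimal flat: if $F_{0}$ were the only minimal member of $\mathcal{M}$, then the first modular-cut axiom would put every flat containing $F_{0}$ into $\mathcal{M}$, while conversely every member of $\mathcal{M}$ contains some minimal member of $\mathcal{M}$ and hence contains $F_{0}$; so $\mathcal{M}$ would be exactly the principal modular cut generated by $F_{0}$, contradicting non-principality. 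Hence $\mathcal{M}$ has two distinct minimal flats $F_{0}\neq F_{1}$.

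The second step is to check that $\{F_{0},F_{1}\}$ is a non-modular pair. Since $F_{0}$ and $F_{1}$ are distinct and both minimal in $\mathcal{M}$, neither contains the other, so $F_{0}\cap F_{1}$ is a flat properly contained in $F_{0}$, and therefore $F_{0}\cap F_{1}\notin\mathcal{M}$ by minimality of $F_{0}$. If $\{F_{0},F_{1}\}$ were a modular pair, the second modular-cut axiom would force $F_{0}\cap F_{1}\in\mathcal{M}$, a contradiction. So $\{F_{0},F_{1}\}$ is a non-modular pair belonging to $\mathcal{M}$.

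The final step is to apply Lemma~\ref{lem:non-principal}. Because $\mathcal{M}$ is non-principal it does not contain $\overline{\emptyset}_{M}$: otherwise upward-closure would put every flat into $\mathcal{M}$, giving $\mathcal{M}=\mathcal{L}(M)$, which is the principal modular cut generated by $\overline{\emptyset}_{M}$. Hence $\mathcal{M}$ is a proper modular cut, i.e.\ it satisfies condition~(\ref{enu:proper}) of Lemma~\ref{lem:non-principal}. Since $\mathcal{M}$ is now known to be a modular cut containing a non-modular pair, that lemma yields condition~(\ref{enu:non-principal}): $\mathcal{M}$ is generated by any non-modular pair in it, in particular by $\{F_{0},F_{1}\}$, which is the assertion. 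I expect no serious obstacle here: Lemma~\ref{lem:non-principal} carries all of the real weight, and is where the rank-$4$ and hypermodularity hypotheses enter, so the only point genuinely needing attention is the lattice-theoretic observation that a modular cut with a single minimal flat must be principal, together with a word to dispatch the degenerate cuts $\emptyset$ and $\mathcal{L}(M)$.
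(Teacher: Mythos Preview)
Your proof is correct and follows the same route as the paper: exhibit a non-modular pair inside $\mathcal{M}$ and then invoke Lemma~\ref{lem:non-principal}. In fact you are more explicit than the paper, which simply asserts that a non-principal modular cut ``has a non-modular pair of rank-$2$ flats'' and applies the lemma; your minimal-flat argument supplies the missing justification (and one small caveat: the empty modular cut is not really ``vacuous'' but rather outside the intended scope of the statement, a wrinkle the paper also leaves implicit).
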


\begin{proof}
Let $\mathcal{M}$ be a non-principal modular cut, then it has a non-modular
pair of rank-$2$ flats. This pair generates $\mathcal{M}$ by Lemma
\ref{lem:non-principal}.
\end{proof}
\begin{lem}
\label{lem:SEE-0}Let $M$ be a rank-$4$ loopless hypermodular but
non-modular matroid, and $\mathcal{M}$ be a modular cut generated
by a non-modular pair. Then, $\mathcal{M}$ is a proper modular cut
if and only if $\mathcal{M}$ has no Vámos line arrangement, that
is, there are no four lines from $\mathcal{M}$ that form a Vámos
line arrangement.
\end{lem}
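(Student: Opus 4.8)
We treat the two implications separately; one is immediate, the other is the substance of the lemma. Suppose first that there are four rank-$2$ flats $T_{1},T_{2},T_{3},T_{4}\in\mathcal{M}$ whose quotients $M/T_{i}$ form a V\'amos line arrangement. By definition exactly one of the six pairs, say $\{T_{3},T_{4}\}$, has empty intersection as subspaces, that is $r(T_{3}\cup T_{4})=r(M)=4$, while no two of the four lines are coplanar, so $T_{3}\cap T_{4}=\overline{\emptyset}$. Hence the modular defect of $\{T_{3},T_{4}\}$ is $2+2-4-0=0$, so $\{T_{3},T_{4}\}$ is a modular pair of flats lying in $\mathcal{M}$; since $\mathcal{M}$ is a modular cut, $\overline{\emptyset}=T_{3}\cap T_{4}\in\mathcal{M}$, and $\mathcal{M}$ is not proper. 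This gives one direction.

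For the converse, assume $\mathcal{M}$ is not proper; we must produce a V\'amos line arrangement from $\mathcal{M}$. Not being proper means $\overline{\emptyset}\in\mathcal{M}$, and then upward closure forces $\mathcal{M}=\mathcal{L}(M)$, so every rank-$2$ flat of $M$ is a line from $\mathcal{M}$ and it suffices to exhibit the configuration inside $M$. By Proposition \ref{prop:gen_nonmod} we may take $\mathcal{M}$ to be generated by a non-modular pair $\{F,L\}$ with $r(F)=3$, $r(L)=2$, and use the notation of Figure \ref{fig:Vamos1}: the rank-$3$ flats through $L$ are $A_{1},\dots,A_{n}$ with $n\ge 3$ by Lemma \ref{lem:Disjoint0}, and $B_{i}:=F\cap A_{i}$ are rank-$2$ flats partitioning $F$, all lying in $\mathcal{M}$. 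Now follow the generation of $\mathcal{M}$ from $\{F,L\}$: since $M$ is hypermodular, every two distinct rank-$3$ flats form a modular pair meeting in a rank-$2$ flat, so the closure proceeds by alternately adjoining the rank-$3$ flats above an already-present rank-$2$ flat and the rank-$2$ intersections of two already-present rank-$3$ flats. A flat of rank $\le 1$ enters $\mathcal{M}$ only as an intersection of a modular pair; taking the first such step and analyzing, via Proposition \ref{prop:non-mod pair}, the possible ranks of the pair (descending to a rank-$2$ subflat already in $\mathcal{M}$ if a rank-$3$ flat occurs), one reduces to the presence of two distinct rank-$2$ flats $G,H\in\mathcal{M}$ that form a modular pair: either a \emph{skew pair}, $G\cap H=\overline{\emptyset}$ and $r(G\cup H)=4$, or a \emph{coplanar pair}, $r(G\cap H)=1$ and $r(G\cup H)=3$.

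From such $G,H$ one assembles the arrangement. A coplanar pair is first turned into a skew pair by passing to the rank-$3$ flats through $G$ other than $\overline{G\cup H}$ and intersecting them, via Lemma \ref{lem:triple-lines}, with rank-$3$ flats through $H$. Given a skew pair $G,H$, no rank-$3$ flat contains both, so any rank-$3$ flat $C\supseteq G$ and any rank-$3$ flat $D\supseteq H$ are distinct, and hypermodularity yields a rank-$2$ flat $T:=C\cap D$ whose line meets $M/G$ at $M/C$ and $M/H$ at $M/D$; similarly a second choice $C'\supseteq G$, $D'\supseteq H$ gives a line $M/T'$ with $T':=C'\cap D'$. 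The plan is to make these choices so that $M/T$ meets $M/T'$ while no three of $M/G,M/H,M/T,M/T'$ are concurrent and no two are coplanar; then $\{M/G,M/H,M/T,M/T'\}$ is a V\'amos line arrangement from $\mathcal{M}$, contradicting the hypothesis, and therefore $\mathcal{M}$ must be proper.

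The principal obstacle is this last step: securing two transversals $T,T'$ of the skew pair that actually meet one another while the four lines stay in general position (exactly one pair failing to meet, no three concurrent, no two coplanar) -- this is exactly the incidence pattern that is non-representable over fields, so the construction must genuinely exploit hypermodularity, through Lemma \ref{lem:triple-lines} and the abundance of rank-$3$ flats through each line given by Lemma \ref{lem:Disjoint0}, together with the fact that the generation has already exhausted $\mathcal{L}(M)$. The attendant bookkeeping excluding the degenerate alternatives, and the reduction of the coplanar case, are where the real work of the proof lies.
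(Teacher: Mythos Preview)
Your first implication (a V\'amos quadruple in $\mathcal{M}$ forces $\overline{\emptyset}\in\mathcal{M}$) is correct and is exactly the paper's argument, stated contrapositively.

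The converse, however, is not a proof but a plan, and the plan cannot be completed as written. You argue the contrapositive: assume $\mathcal{M}$ is not proper, so $\mathcal{M}=\mathcal{L}(M)$, and then try to \emph{construct} a V\'amos line arrangement inside the hypermodular matroid $M$. But Lemma~\ref{lem:Vamos} (proved later, independently of the present lemma) shows that a rank-$4$ hypermodular matroid contains \emph{no} V\'amos line arrangement whatsoever. Thus the object you are trying to build does not exist in $M$; your ``principal obstacle''---getting two transversals of a skew pair to meet while keeping the four lines in V\'amos position---is not merely delicate bookkeeping, it is impossible. The implication you want is in fact vacuously true in the hypermodular setting, but your explicit construction cannot witness it, and you have not derived a contradiction from the non-properness assumption either.

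The paper avoids this trap by proving the direct implication. Assuming $\mathcal{M}$ has no V\'amos quadruple, it builds the ascending chain $\mathcal{T}_{0}\subset\mathcal{T}_{1}\subset\cdots$ of rank-$2$ flats obtained by iteratively connecting intersection points, and uses the no-V\'amos hypothesis \emph{positively}: whenever a new line $M/T\in\mathcal{T}_{1}$ meets two lines of $\mathcal{T}_{0}$, the absence of a V\'amos pattern forces the remaining pair to meet as well (an anti-V\'amos configuration), so $M/T$ meets \emph{every} line of $\mathcal{T}_{0}$. This yields $\mathcal{T}=\mathcal{T}_{1}$ with all lines pairwise intersecting; Lemma~\ref{lem:triple-lines} then makes the underlying rank-$2$ flats pairwise disjoint, so $\mathcal{M}$ contains no flat of rank $\le 1$ and is proper by Lemma~\ref{lem:non-principal}. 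The key idea you are missing is this anti-V\'amos forcing step, which turns the hypothesis into a tool rather than a target.
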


\begin{proof}
Suppose $\mathcal{M}$ is a proper modular cut. If $\mathcal{M}$
has a Vámos line arrangement, say $\left\{ M/T_{0},M/T_{1},M/L_{0},M/L_{1}\right\} $
with $r(L_{0}\cup L_{1})=4$, then $\left\{ L_{0},L_{1}\right\} $
is a modular pair and $\emptyset=L_{0}\cap L_{1}\in\mathcal{M}$,
a contradiction. So, $\mathcal{M}$ has no Vámos line arrangement.

Conversely, suppose $\mathcal{M}$ has no Vámos line arrangement.
We may assume $\mathcal{M}$ is generated by a non-modular pair $\left\{ F,L\right\} $
with $r(F)=3$ and $r(L)=2$ and assume the setting of Figure \ref{fig:Vamos1}.
By hypermodularity of $M$, any two points from $\mathcal{M}$ are
connected by a line of $M$, which is a line from $\mathcal{M}$ since
$\mathcal{M}$ is a modular cut. For $i=1,2,\dots$ let $\left\{ M/T:T\in\mathcal{T}_{i}\right\} $
be the set of all lines connecting two distinct points on the lines
of $\left\{ M/T:T\in\mathcal{T}_{i-1}\right\} $. Then, we obtain
an ascending chain of sets 
\[
\mathcal{T}_{0}\subset\mathcal{T}_{1}\subset\mathcal{T}_{2}\subset\cdots
\]
 which stabilizes with $\mathcal{T}=\mathcal{T}_{i}$ for some $i$
where $\mathcal{T}$ is the set of all rank-$2$ flats in $\mathcal{M}$
and $\mathcal{M}=\mathrm{MC}(\mathcal{T})=\mathrm{MC}(\mathcal{T}_{0})$.

We show $\mathcal{T}=\mathcal{T}_{1}$. Let $M/T$ be any line from
$\mathcal{T}_{1}-\mathcal{T}_{0}$, then it intersects $M/L$. Indeed,
suppose not, then $M/T$ intersects at least two lines from $\mathcal{T}_{0}$
other than $M/L$, and these two lines plus $M/T$ and $M/L$ form
an Anti-Vámos line arrangement, and $M/T$ and $M/L$ meet at a point,
a contradiction. Similarly, $M/T$ intersects every line from $\mathcal{T}_{0}$,
and further two lines from $\mathcal{T}_{1}$ intersect at a point
on a line from $\mathcal{T}_{0}$. This implies that all points on
lines from $\mathcal{T}_{1}$ are points on lines from $\mathcal{T}_{0}$,
and $\mathcal{T}=\mathcal{T}_{1}$.

Then, for any pair $\left\{ T,T'\right\} $ of two rank-$2$ flats
in $\mathcal{T}$, the flat $\overline{T\cup T'}$ is a rank-$3$
flat in $\mathcal{M}$, and $T\cap T'=\emptyset$ by Lemma \ref{lem:triple-lines}.
Thus, $\mathcal{T}$ is a disjoint collection, and $\mathcal{M}$
has no flat of rank $\le1$. Therefore $\mathcal{M}$ is a proper
modular cut by Lemma \ref{lem:non-principal}.
\end{proof}
\begin{lem}
\label{lem:SEE-1}Let $M$ be a rank-$4$ loopless hypermodular matroid
that is non-modular, and $\mathcal{M}$ a non-principal modular cut.
Then, $M+_{\mathcal{M}}e$ is a rank-$4$ loopless hypermodular matroid
whose modular defect is less than that of $M$, and 
\begin{equation}
\mathcal{L}(M+_{\mathcal{M}}e)=\varphi(\mathcal{L}(M))\sqcup\left\{ e\right\} .\label{eq:lattice}
\end{equation}
\end{lem}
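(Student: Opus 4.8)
The plan is to verify the five claims about $N:=M+_{\mathcal{M}}e$: that $r(N)=4$; that $N$ is loopless; the lattice identity \eqref{eq:lattice}; that $N$ is hypermodular; and that its modular defect drops. Since $\mathcal{M}$ is non-principal, Corollary~\ref{cor:non-principal} gives a non-modular pair $\{F,L\}$ generating it, so in particular $\mathcal{M}\neq\emptyset$; then Lemma~\ref{lem:non-principal} tells us $\mathcal{M}$ is proper, contains no flat of rank $\leq 1$, and its set $\mathcal{T}$ of rank-$2$ flats partitions $E(M)$. The first two claims are then immediate: $r(N)=r(M)=4$ by Corollary~\ref{cor:Crapo2}(1) as $\mathcal{M}\neq\emptyset$; and $\overline{\emptyset}_M=\emptyset\notin\mathcal{M}$ since $\mathcal{M}$ is proper, so $e$ is not a loop of $N$ by Corollary~\ref{cor:Crapo2}(2), while $N|_{E(M)}=M$ is loopless, hence $N$ is loopless.

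Next, and this is the heart of the matter, I prove \eqref{eq:lattice}. By Corollary~\ref{cor:Crapo1} the flats of $N$ outside the image of $\varphi$ are exactly the sets $L'\cup\{e\}$ with $L'$ a proper flat of $M$ that no member of $\mathcal{M}$ covers, so it suffices to show the only such $L'$ is $\overline{\emptyset}_M=\emptyset$; this indeed produces the extra rank-$1$ flat $\{e\}$, since $\mathcal{M}$ has no rank-$1$ flat to cover $\emptyset$. If a proper flat $L'$ lies in $\mathcal{M}$, then all flats covering $L'$ lie in $\mathcal{M}$ by upward closure of modular cuts and at least one exists, so $L'$ is covered by a member of $\mathcal{M}$; hence any uncovered proper flat lies outside $\mathcal{M}$. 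Now, outside $\mathcal{M}$: a rank-$3$ flat is covered by $E(M)\in\mathcal{M}$; a rank-$1$ flat lies in, and hence is covered by, a block of $\mathcal{T}$. There remains a rank-$2$ flat $L'\notin\mathcal{M}$: choose $a\in L'$ with block $T\in\mathcal{T}$, and pass to $M/\overline{\{a\}}$, the contraction by the corank-$3$ flat $\overline{\{a\}}$, which by Proposition~\ref{prop:inherit-HM} is hypermodular of rank $3$, hence modular; there the two distinct points arising from $L'$ and $T$ lie on a common line, which pulls back to a rank-$3$ flat $G$ of $M$ containing $L'$ and $T$, and $G\in\mathcal{M}$ by upward closure since $T\in\mathcal{T}\subseteq\mathcal{M}$. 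So $L'$ is covered by $G\in\mathcal{M}$, as needed. I expect this last, rank-$2$, case to be the principal obstacle.

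With \eqref{eq:lattice} in hand, the rank-$3$ flats of $N$ are precisely the $\varphi(G)$ for $G$ a rank-$3$ flat of $M$. For distinct such $G,G'$, hypermodularity of $M$ makes $\{G,G'\}$ modular with $r_M(G\cap G')=2$, so $G\cap G'\neq\emptyset$, and if $G,G'\in\mathcal{M}$ then $G\cap G'\in\mathcal{M}$ by the second modular-cut axiom; in every case $\varphi(G)\cap\varphi(G')$ equals $G\cap G'$ or $\varphi(G\cap G')$, of rank $2$ in $N$, while $\varphi(G)\cup\varphi(G')\supseteq G\cup G'$ has rank $4$. Hence $3+3-4-2=0$, so $\{\varphi(G),\varphi(G')\}$ is modular and $N$ is hypermodular.

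Finally, for the modular defect: $\varphi$ is rank-preserving, $\varphi(X)\supseteq X$ for every flat $X$ of $M$, and $r_N$ agrees with $r_M$ on subsets of $E(M)$, so for all flats $A,B$ of $M$
\[
r_N(\varphi(A)\cup\varphi(B))\ge r_M(A\cup B),\qquad r_N(\varphi(A)\cap\varphi(B))\ge r_M(A\cap B),
\]
whence the modular defect of $\{\varphi(A),\varphi(B)\}$ in $N$ is at most that of $\{A,B\}$ in $M$. By \eqref{eq:lattice} and injectivity of $\varphi$, the pairs of flats of $N$ are precisely these $\{\varphi(A),\varphi(B)\}$ together with the pairs involving $\{e\}$, which, being a rank-$1$ flat of the loopless matroid $N$, is modular and contributes defect $0$. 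Since $\{F,L\}$ generates $\mathcal{M}$, Corollary~\ref{cor:Crapo3} gives that the modular defect of $\{\varphi(F),\varphi(L)\}$ in $N$ is exactly one less than that of $\{F,L\}$ in $M$. Summing over all pairs, the modular defect of $N$ is at most that of $M$ minus one, completing the proof.
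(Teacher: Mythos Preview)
Your proof is correct and follows essentially the same route as the paper: invoke Corollary~\ref{cor:Crapo2} for rank and looplessness, show via Lemma~\ref{lem:non-principal} that the only proper flat not covered by a member of $\mathcal{M}$ is $\emptyset$ to obtain \eqref{eq:lattice}, deduce hypermodularity from the resulting lattice description, and conclude the defect drop from Corollary~\ref{cor:Crapo3}. Your treatment of the rank-$2$ case via contraction to a rank-$3$ modular matroid is a slight variant of the paper's argument (which instead takes two distinct blocks $T,T'\in\mathcal{T}$ meeting $L'$ and uses $\overline{T\cup T'}$), and your explicit verifications of hypermodularity and of the non-increase of pairwise defects under $\varphi$ fill in details the paper leaves implicit.
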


\begin{proof}
By Corollary \ref{cor:Crapo2}, $M+_{\mathcal{M}}e$ is a rank-$4$
loopless matroid. By Corollary \ref{cor:non-principal}, the non-principal
modular cut $\mathcal{M}$ is generated by a non-modular pair.

Then, $\emptyset$ is only proper flat that no member of $\mathcal{M}$
covers in $\mathcal{L}(M)$. Indeed, every rank-$1$ flat is covered
by some rank-$2$ flat in the set $\mathcal{T}$ of rank-$2$ flats
in $\mathcal{M}$ since $\mathcal{T}$ is a partition of $E(M)$ by
Lemma \ref{lem:non-principal}. Every rank-$2$ flat in $\mathcal{T}$
is covered by a rank-$3$ flat in $\mathcal{M}$ since $\mathcal{M}$
is a modular cut, and every rank-$2$ flat not in $\mathcal{T}$ intersects
two rank-$2$ flats $T,T'\in\mathcal{T}$ and is covered by a rank-$3$
flat $\overline{T\cup T'}\in\mathcal{M}$. Every rank-$3$ flat is
covered by $E(M)\in\mathcal{M}$. Hence, we have (\ref{eq:lattice})
by Corollary \ref{cor:Crapo1}.

Then, the lattice structures of $\mathcal{L}(M+_{\mathcal{M}}e)$
and $\mathcal{L}(M)$ restricted to the elements of rank $\ge2$ are
the same, which implies that $M+_{\mathcal{M}}e$ is also hypermodular.
By Corollary \ref{cor:Crapo3}, the modular defect of $M+_{\mathcal{M}}e$
is less than that of $M$.
\end{proof}
\begin{thm}
\label{thm:Vamos}Every rank-$4$ hypermodular loopless matroid $M$
having no Vámos line arrangement is a restriction of a rank-$4$ modular
loopless matroid $\tilde{M}$ with $|E(\tilde{M})|-\left|E(M)\right|$
being the number of non-principal modular cuts of $M$.
\end{thm}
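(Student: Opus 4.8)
The plan is to induct on the modular defect $d(M)$ of $M$. If $d(M)=0$ then $M$ is modular, so every nonempty modular cut of $M$ is closed under all intersections and is therefore principal; hence $M$ has no non-principal modular cut and $\tilde M:=M$ works. So suppose $d(M)>0$, so that $M$ is non-modular. By Propositions \ref{prop:equiv} and \ref{prop:non-mod pair} the matroid $M$ has a non-modular pair, and the modular cut $\mathcal{M}$ it generates is proper by Lemma \ref{lem:SEE-0} (since $M$ --- and hence $\mathcal{M}$ --- has no V\'amos line arrangement), hence non-principal by Lemma \ref{lem:non-principal}. I would then set $N:=M+_{\mathcal{M}}e$; by Lemma \ref{lem:SEE-1} it is a rank-$4$ loopless hypermodular matroid with $d(N)<d(M)$ whose lattice of flats is given by (\ref{eq:lattice}), where $\varphi=\varphi_{\mathcal{M}}$ preserves rank and the lattice operations on flats of rank $\ge 2$ (Corollary \ref{cor:Crapo1}), and where $\varphi(X)$ contains $e$ precisely when $X\in\mathcal{M}$.

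Next I would verify that $N$ itself has no V\'amos line arrangement, so that the induction hypothesis is available for $N$. Suppose $\{\varphi(L_1),\dots,\varphi(L_4)\}$ were one. Since in a V\'amos arrangement no two lines are coplanar, $\varphi(L_i)\cap\varphi(L_j)$ has rank $0$ for all $i\ne j$; but $e\in\varphi(L_i)\cap\varphi(L_j)$ whenever $L_i,L_j\in\mathcal{M}$, so at most one $L_i$ lies in $\mathcal{M}$. If none does, then every union and intersection occurring in the V\'amos conditions avoids $e$, so $\{L_1,\dots,L_4\}$ is already a V\'amos line arrangement of $M$ --- a contradiction. If, say, $L_1\in\mathcal{M}$, then $\overline{L_1\cup L_j}$ and $\overline{L_1\cup L_j\cup L_k}$ contain $L_1\in\mathcal{M}$ and so lie in $\mathcal{M}$, whence adjoining $e$ leaves their ranks unchanged by Proposition \ref{prop:Crapo}, while $\varphi(L_1)\cap\varphi(L_j)=L_1\cap L_j$; again $\{L_1,\dots,L_4\}$ is a V\'amos line arrangement of $M$, a contradiction. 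Hence $N$ has none.

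The crux is the counting identity: the number of non-principal modular cuts of $N$ is one less than that of $M$. I would prove it by constructing a bijection $\psi$ from the non-principal modular cuts of $N$ onto the non-principal modular cuts of $M$ other than $\mathcal{M}$. Given a non-principal modular cut $\mathcal{N}$ of $N$, its rank-$2$ flats $\mathcal{T}_{\mathcal{N}}$ partition $E(N)=E(M)\sqcup\{e\}$ by Lemma \ref{lem:non-principal}; the unique part $U^{*}$ containing $e$ is $\varphi(L^{*})=L^{*}\cup\{e\}$ for some rank-$2$ flat $L^{*}\in\mathcal{M}$ --- these being exactly the rank-$2$ flats of $N$ through $e$. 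Setting $\mathcal{T}^{M}:=\{L^{*}\}\cup(\mathcal{T}_{\mathcal{N}}\setminus\{U^{*}\})$, which is a partition of $E(M)$ into rank-$2$ flats of $M$, I would define $\psi(\mathcal{N}):=\mathrm{MC}_{M}(\mathcal{T}^{M})$. Using that $\mathcal{T}_{\mathcal{N}}$ is closed under drawing the line through two points (as in the proof of Lemma \ref{lem:SEE-0}) together with the description of which flats of $N$ contain $e$, one checks that $\mathcal{T}^{M}$ is closed as well, so (since $M$ has no V\'amos line arrangement) $\psi(\mathcal{N})$ is a non-principal modular cut of $M$ with rank-$2$ flats exactly $\mathcal{T}^{M}$, distinct from $\mathcal{M}$ because $\mathcal{T}^{M}$ contains a rank-$2$ flat not in $\mathcal{M}$. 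Injectivity is immediate, since $L^{*}$ is the unique part of $\mathcal{T}^{M}$ lying in $\mathcal{M}$ and the other parts then recover $\mathcal{T}_{\mathcal{N}}$. Surjectivity requires that any non-principal modular cut $\mathcal{N}'\ne\mathcal{M}$ of $M$, with rank-$2$ partition $\mathcal{T}'$, shares exactly one part $L^{*}$ with the rank-$2$ partition $\mathcal{T}_{\mathcal{M}}$ of $\mathcal{M}$, for then $\psi$ carries $\mathrm{MC}_{N}\bigl((\mathcal{T}'\setminus\{L^{*}\})\cup\{L^{*}\cup\{e\}\}\bigr)$ to $\mathcal{N}'$. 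That two distinct such partitions share \emph{at most} one part is easy --- a common non-modular pair would generate a non-principal modular cut whose rank-$2$ partition is contained in both, forcing them to coincide --- but proving that they share \emph{at least} one part seems to require exploiting the absence of V\'amos line arrangements in $M$ once more, and this is the step I expect to be the main obstacle.

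Granting the counting identity, the induction closes: applying the hypothesis to $N$ produces a rank-$4$ loopless modular matroid $\tilde M$ with $N=\tilde M|_{E(N)}$ and $|E(\tilde M)|-|E(N)|$ equal to the number of non-principal modular cuts of $N$; since $M=N|_{E(M)}=\tilde M|_{E(M)}$ and $|E(N)|=|E(M)|+1$, it follows that $|E(\tilde M)|-|E(M)|$ is one more than the number of non-principal modular cuts of $N$, hence equals the number of non-principal modular cuts of $M$, as desired.
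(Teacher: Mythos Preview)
Your overall strategy---induct on the modular defect by a single-element extension along a non-principal modular cut---is exactly the paper's, and your verification that $N$ again has no V\'amos line arrangement is correct (the paper handles this more tersely via the rank-$\ge 2$ lattice isomorphism in Lemma~\ref{lem:SEE-1}, but your case analysis is a valid expansion of that).

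The difference, and the place where you have a genuine gap, is the counting step. The paper does not go through partitions and your ``shared part'' claim at all. Instead it tracks \emph{non-modular pairs}: since $\varphi$ is a rank-preserving bijection on flats of rank $\ge 2$ and (\ref{eq:lattice}) holds, every non-modular pair of $N$ is $\{\varphi(X),\varphi(Y)\}$ for a non-modular pair $\{X,Y\}$ of $M$, and by Corollary~\ref{cor:Crapo3} this image is modular precisely when $X,Y\in\mathcal{M}$. Because non-principal modular cuts are exactly those generated by (any of) their non-modular pairs (Corollary~\ref{cor:non-principal} and Lemma~\ref{lem:non-principal}), this yields directly that $\mathcal{M}'\mapsto\varphi(\mathcal{M}')$ is a bijection from the non-principal modular cuts of $M$ other than $\mathcal{M}$ onto those of $N$, so the count drops by exactly one.

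This also dissolves the obstacle you flagged. Given a non-principal $\mathcal{M}'\ne\mathcal{M}$ of $M$, choose a non-modular pair $\{X,Y\}\subseteq\mathcal{M}'$; since $\mathcal{M}'\ne\mathcal{M}$ not both lie in $\mathcal{M}$, so $\{\varphi(X),\varphi(Y)\}$ is a non-modular pair of $N$ and generates a non-principal modular cut of $N$ (Lemmas~\ref{lem:SEE-0} and~\ref{lem:non-principal}, using that $N$ has no V\'amos arrangement). Its rank-$2$ flats, which are exactly $\{\varphi(T):T\in\mathcal{T}_{\mathcal{M}'}\}$, must then partition $E(N)$ and in particular cover $e$; hence some $T\in\mathcal{T}_{\mathcal{M}'}$ lies in $\mathcal{M}$, which is precisely your ``at least one shared part''. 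So your bijection is salvageable, but the paper's route through non-modular pairs is shorter and sidesteps the issue entirely.
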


\begin{proof}
If $M$ is modular, nothing to prove. So, we may assume $M$ is non-modular.
By assumption, every modular cut generated by a non-modular pair has
no Vámos line arrangement and is proper and non-principal by Lemmas
\ref{lem:SEE-0} and \ref{lem:non-principal}. Thus, a modular cut
of $M$ is non-principal if and only if it is generated by a non-modular
pair by Corollary \ref{cor:non-principal}.

Let $M_{0}:=M$. By Proposition \ref{prop:equiv}, there is a non-principal
modular cut $\mathcal{M}_{0}$ and by Lemma \ref{lem:SEE-1}, the
extension $M_{1}:=M_{0}+_{\mathcal{M}_{0}}e_{1}$ is a rank-$4$ hypermodular
loopless matroid whose modular defect is less than that of $M$, which
has no Vámos line arrangement. So, we can repeat this extension process
and obtain a sequence of rank-$4$ hypermodular loopless matroids
$M_{i}$ for $i=1,2,\dots$ with non-principal modular cuts $\mathcal{M}_{i-1}$
of $M_{i-1}$ such that 
\[
M_{i}=M_{i-1}+_{\mathcal{M}_{i-1}}e_{i}.
\]
The sequence of modular defects of $M_{i}$ for $i=1,2,\dots$ is
a strictly decreasing sequence of nonnegative integers, and hence
the extension process must terminate. Let $\tilde{M}$ be the terminal
matroid, then $\tilde{M}$ is a rank-$4$ loopless matroid which is
a modular extension of $M$ since its modular defect is $0$.

In each intermediate matroid $M_{i-1}$, let $\left\{ X,Y\right\} $
be a non-modular pair of $M_{i-1}$, then its modular defect does
not exceed $1$ because $M_{i-1}$ is a hypermodular matroid of rank
$4$. By Corollaries \ref{cor:Crapo3} and \ref{cor:Crapo1}, the
pair $\left\{ \varphi_{\mathcal{M}_{i-1}}(X),\varphi_{\mathcal{M}_{i-1}}(Y)\right\} $
of $M_{i}$ is modular if $X,Y\in\mathcal{M}_{i-1}$, and non-modular
otherwise. By (\ref{eq:lattice}), every non-modular pair of $M_{i}$
is the image of a non-modular pair of $M_{i-1}$ via $\varphi_{\mathcal{M}_{i-1}}$
and that of $M$ via $\varphi_{\mathcal{M}_{i-1}}\circ\cdots\circ\varphi_{\mathcal{M}_{0}}$.
Then, every non-principal modular cut of $M_{i}$ is the image of
a non-principal modular cut of $M$ via $\varphi_{\mathcal{M}_{i-1}}\circ\cdots\circ\varphi_{\mathcal{M}_{0}}$,
which implies that $|E(\tilde{M})|-\left|E(M)\right|$ is the number
of non-principal modular cuts of $M$.
\end{proof}
Every rank-$3$ matroid satisfies the property of having no Vámos
line arrangement. For rank-$4$ matroids, that  property no longer
holds, but hypermodularity prevents the presence of Vámos line arrangement.
\begin{lem}
\label{lem:Vamos}Every rank-$4$ hypermodular matroid has no Vámos
line arrangement.
\end{lem}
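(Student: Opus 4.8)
The plan is to argue by contradiction. Assume $M$ is a rank-$4$ hypermodular matroid, which we may take loopless, carrying a V\'amos line arrangement $\{M/T_{0},M/T_{1},M/L_{0},M/L_{1}\}$: here $T_{0},T_{1},L_{0},L_{1}$ are pairwise disjoint rank-$2$ flats with $r(T_{0}\cup T_{1})=r(T_{i}\cup L_{j})=3$ for $i,j\in\{0,1\}$, with $r(L_{0}\cup L_{1})=4$, and with every triple among $T_{0},T_{1},L_{0},L_{1}$ having union of rank $4$ (the ``no three lines meet at a point'' clause). Put $P:=\overline{T_{0}\cup T_{1}}$ and $G_{ij}:=\overline{T_{i}\cup L_{j}}$; these are five rank-$3$ flats, pairwise distinct since any coincidence would force one of the above triple-unions to have rank $\le3$. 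The goal is to contradict $r(L_{0}\cup L_{1})=4$.

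First I would read off the incidences forced by hypermodularity. Since $P,G_{00},G_{01},G_{10},G_{11}$ are corank-$1$ flats, any two of them form a modular pair, and being distinct their union has rank $4$; hence any two meet in a flat of rank exactly $2$. A short computation gives $P\cap G_{0j}=T_{0}$, $P\cap G_{1j}=T_{1}$, $G_{i0}\cap G_{i1}=T_{i}$ and $G_{0j}\cap G_{1j}=L_{j}$, i.e.\ the point $M/G_{ij}$ lies on the lines $M/T_{i}$ and $M/L_{j}$, reproducing the configuration of Figure~\ref{fig:Vamos1}. The two remaining intersections $U:=G_{00}\cap G_{11}$ and $V:=G_{01}\cap G_{10}$ are again rank-$2$ flats; one checks that $U\cap V\subseteq(G_{00}\cap G_{01})\cap(G_{10}\cap G_{11})=T_{0}\cap T_{1}=\emptyset$ and that neither $U$ nor $V$ equals any of $T_{0},T_{1},L_{0},L_{1}$ (each such equality again forces a rank-$3$ triple-union). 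Thus hypermodularity supplies two extra lines $M/U,M/V$, where $M/U$ passes through $M/G_{00},M/G_{11}$ and $M/V$ through $M/G_{01},M/G_{10}$; in particular $M/T_{0},M/T_{1},M/U,M/V$ are four transversals of the ``skew'' pair $M/L_{0},M/L_{1}$, two of which, $M/T_{0}$ and $M/T_{1}$, intersect, namely at $M/P$.

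The last step is to derive the contradiction. In a genuine projective $3$-space the statement is immediate: the plane spanned by the two intersecting transversals $M/T_{0},M/T_{1}$ would contain all of $M/G_{00},M/G_{01},M/G_{10},M/G_{11}$, hence contain the line through $M/G_{00}$ and $M/G_{10}$, which is $M/L_{0}$, and the line through $M/G_{01}$ and $M/G_{11}$, which is $M/L_{1}$, forcing $r(L_{0}\cup L_{1})\le3$. This argument uses that two intersecting lines span a plane, which fails in a rank-$4$ matroid exactly when the two rank-$2$ flats are disjoint, and that is our situation since $T_{0}\cap T_{1}=\emptyset$. So instead I would pass to a rank-$3$ quotient: choose a rank-$1$ flat $J$ and consider $M/J$, which is hypermodular of rank $3$ and hence modular; then the images of $M/L_{0}$ and $M/L_{1}$ are two distinct lines of the modular rank-$3$ matroid $M/J$, so they meet, and one pulls the intersection point back to a rank-$2$ flat $X\supseteq J$ with $r(X\cup L_{0})=r(X\cup L_{1})=3$. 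The art is to choose $J$, exploiting the extra transversals $U,V$ and the intersection point $P$, so that this forced rank-$2$ flat $X$, together with the incidence data already present, collapses $r(L_{0}\cup L_{1})$ to $3$.

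The main obstacle is precisely this closing step. A careless choice of $J$ makes the quotient tell us only that $M/L_{0},M/L_{1}$ meet at the image of a transversal we already knew about, which is no contradiction; the choice has to be coupled to the intersecting pair $M/T_{0},M/T_{1}$. Moreover the ``generic'' choice of $J$ may be unavailable in small or degenerate configurations, where $P$ or some $G_{ij}$ is a union of only two of the known rank-$2$ flats and hence has too few rank-$1$ subflats, so those cases likely require a separate direct submodularity computation. An alternative to the quotient argument is to apply Lemma~\ref{lem:triple-lines} to the various triples of the six lines $M/T_{0},M/T_{1},M/L_{0},M/L_{1},M/U,M/V$ that share a common point, but that route appears to demand the same bookkeeping and the same treatment of degeneracies.
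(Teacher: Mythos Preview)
Your setup is correct and the instinct to contract by a rank-$1$ flat $J$ down to a rank-$3$ hypermodular (hence modular) matroid is exactly the mechanism the paper uses. But your proposal stops at the point that matters: you never specify $J$, and you yourself call the closing step ``the main obstacle.'' As written this is a plan, not a proof, and the difficulty you flag is real --- a careless $J$ only reproduces a transversal to $l_{0},l_{1}$ that you already have.

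Here is how the paper closes the gap. Instead of the diagonals $U,V$ of your tetrahedron on the vertices $M/G_{ij}$, it builds an auxiliary tetrahedron from four \emph{new} planes: two planes $H_{1},H_{2}$ through $l_{1}$ (not containing $l_{0}$) and two planes $Y_{1},Y_{2}$ through $l_{0}$. The opposite edges $H_{1}\cap Y_{1}$ and $H_{2}\cap Y_{2}$ of this tetrahedron cannot be coplanar, because a common point would lie on $H_{1}\cap H_{2}=l_{1}$ and on $Y_{1}\cap Y_{2}=l_{0}$ simultaneously, contradicting $r(L_{0}\cup L_{1})=4$. For the choice of $J$: hypermodularity produces a line $l_{3}$ joining $P_{0}=t_{1}\cap l_{1}$ to one of the points $H_{i}\cap l_{0}$, and one then takes a plane $H_{3}=M/J$ through $l_{3}$ distinct from $H_{1}$; Lemma~\ref{lem:triple-lines} guarantees that $H_{3}$ contains none of the other lines in sight. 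In the rank-$3$ quotient $M/J$ the four planes $H_{1},H_{2},Y_{1},Y_{2}$ become four lines, while the opposite edges $H_{1}\cap Y_{1}$ and $H_{2}\cap Y_{2}$ become two \emph{points}, which hypermodularity of $M/J$ forces onto a common line --- i.e.\ coplanar back in $M$, the desired contradiction.

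So the idea you were missing is twofold: arrange that the two objects to be joined in the quotient are opposite edges of a tetrahedron whose \emph{other} opposite-edge pair is $\{l_{0},l_{1}\}$, so that their coplanarity instantly forces $l_{0}\cap l_{1}\neq\emptyset$; and choose $J$ on an auxiliary line manufactured via hypermodularity so that those two edges survive as distinct points after contraction.
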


\begin{proof}
Let $M$ be a rank-$4$ hypermodular matroid. We may assume $M$ is
loopless. To prove by contradiction, suppose $M$ has a Vámos line
arrangement, say $\left\{ t_{0},t_{1},l_{0},l_{1}\right\} $ of Figure
\ref{fig:Vamos0}. Let $P_{0}$ be the intersection point of $t_{1}$
and $l_{1}$, see Figure \ref{fig:Vamos3}.

There are two planes $H_{1}$ and $H_{2}$ containing $l_{1}$ which
do not contain $l_{0}$ by Lemma \ref{lem:triple-lines}. By Bézout's
theorem, they intersect $l_{0}$ at two distinct points, each of which
is connected to the point $P_{0}$ by a line due to hypermodularity
of $M$; let $l_{2}$ and $l_{3}$ be these connecting lines contained
in $H_{2}$ and $H_{1}$, respectively. Similarly, let $Y_{1}$ and
$Y_{2}$ be two planes containing $l_{0}$, then they intersect $l_{1}$
at two distinct points. Thus, we have four points, six lines connecting
two of them, and four planes whose union contains a ``tetrahedron''.

Let $H_{3}\neq H_{1}$ be another plane containing $l_{3}$, then
it does not contain $l_{2}$ by Lemma \ref{lem:triple-lines}. Similarly,
$H_{3}$ contains a line neither of the Vámos line arrangement nor
of the ``tetrahedron''. We may write $H_{3}=\eta(M')$ with $M'=M/\overline{\{h\}}$
for some $h\in E(M)$ where $\overline{\{h\}}$ is a rank-$1$ flat
of $M$, and $M'$ is a rank-$3$ loopless matroid which is hypermodular
by Proposition \ref{prop:inherit-HM}.\smallskip{}

By contracting $M$ over $\overline{\{h\}}$, the four planes $H_{1}$,
$H_{2}$, $Y_{1}$, and $Y_{2}$ of $M$ that contain four ``facets''
of the ``tetrahedron'' are transformed into four lines of $M'$,
see Figure \ref{fig:Vamos4}, while two lines $H_{1}\cap Y_{1}$ and
$H_{2}\cap Y_{2}$ of $M$ are transformed into two points of $M'$.
These two points are connected by a line of $M'$ by hypermodularity
of $M'$, which implies that the two lines $H_{1}\cap Y_{1}$ and
$H_{2}\cap Y_{2}$ of $M$ are contained in a plane of $M$, a contradiction.
Thus, we conclude that $M$ has no Vámos line arrangement. The proof
is complete.
\end{proof}
\begin{figure}[th]
\begin{spacing}{0.4}
\noindent \begin{centering}
\noindent \begin{center}
\begin{tikzpicture}[font=\scriptsize]

\begin{scope}[line cap=round,scale=1]

 \path [name path=line9] (0,1.732)--++(-60:4);
 \path [name path=line0] (0,0)--(10.5,0) node[above]{$l_{0}$};

 \foreach \x [count=\xi] in {A1,A2,A3,A4}{
    \path (2+2*\xi,1.732) coordinate(\x);
}

 \path (A1)--++(-.756*.6,-.655*.6)coordinate(A1);
 \path (A3)--++(-.918*.8,-.397*.8)coordinate(A3);
 \path (A4)--++(-.945*.5,-.327*.5)coordinate(A4);

 \path [name path=line1] (0,-1.732)--(A1);
 \path [name path=line2] (0,-1.732)--(A2);
 \path [name path=line3] (0,-1.732)--(A3);
 \path [name path=line4] (0,-1.732)--(A4);

 \path [name intersections={of=line1 and line0, by=p1}] (p1);
 \path [name intersections={of=line2 and line9, by=p2}] (p2);
 \path [name intersections={of=line3 and line0, by=p3}] (p3);
 \path [name intersections={of=line4 and line9, by=p4}] (p4);

 \path (3,0)++(210:1)coordinate(q1) (3,0)++(30:1)coordinate(q3);

 \foreach \x/\y/\z [count=\xi] in {gray!25/2/.7,blue!20/3/.5,gray!25/4/.7}{
    \fill [\x,opacity=\z,draw=none](A\xi)--(0,-1.732)--(A\y)--cycle;}

 \draw [thick] (0,-1.732)--(0,1.732)--++(-60:4) (0,0)--(2,0) (5,0)--(10.5,0);
 \draw [thick,opacity=.4] (2,0)--(5,0);

 \fill [opacity=.5,green!70] (p1)--(q3)--(p3)--(q1)--cycle;

 \path [black!80](3.5,0) node[above=-1.5pt]{$Y_{1}$};
 \path [black!80](2.5,0) node[below=-1pt]{$Y_{2}$};

 \foreach \x/\y in {1/2,2/1,3/3}{
    \draw [thick] (0,-1.732)--(A\x)node[above right=-4pt]{$l_{\y}$};}
    \draw [thick] (0,-1.732)--(A4);

 \foreach \x/\y in {1/red,3/blue}{
 \draw [very thick,\y] (p\x)--(q\x);
 }

 \foreach \x in {(0,-1.732),(0,0),(0,1.732),(1,0),(p1),(p2),(p3),(p4),(q1),(q3)}{
    \fill [black]\x circle (2pt);}

 \path (4.05,1) node{$H_2$};
 \path (5.2,1) node{$H_1$};
 \path (7.2,1) node{$H_3$};

 \path (0,-1.732) node[below left=-4pt]{$_{P_0}$};
 \path (p2) node[above=-1pt]{};
 \path (p3) node[above right=-3.5pt]{};
 \path (p4) node[above=-1pt]{};

 \path (0.5,0.8) node[above right=-3pt]{$t_{0}$};
 \path (0,0.8) node[above left=-3pt]{$t_{1}$};

\end{scope}

\end{tikzpicture}
\par\end{center}
\par\end{centering}
\end{spacing}
\begin{spacing}{0.3}
\noindent \centering{}\caption{\label{fig:Vamos3}Hypermodularity with the Vámos Line Arrangement.}
\end{spacing}
\end{figure}
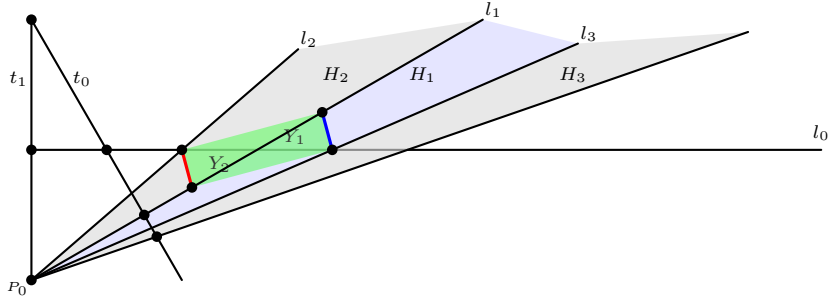
\begin{figure}[th]
\begin{spacing}{0.4}
\noindent \begin{centering}
\noindent \begin{center}
\begin{tikzpicture}[font=\scriptsize]

\begin{scope}[line cap=round,scale=1]

 \path (0,-1.732)coordinate(p0)
       (6.646,0)coordinate(p8);

 \path [name path=line2] (p0)--++(2*1.6,1.732*1.6);
 \path [name path=line7](p8)--++(-2.78*1.5,.5*1.5);

 \path [name intersections={of=line2 and line7, by=p9}] (p9);

 \path (2,0)coordinate(p1)
       (4,0)coordinate(p3);

 \path (3,0)++(210:1)coordinate(q1)
       (3,0)++(30:1)coordinate(q3);

 \draw [thick] (p1)--(p8)--(p9)--(p0)--(q3);
 \draw [very thick,green!75!black] (p1)--(q3);
 
 \foreach \x/\y in {(p1)/red,(p8)/black,(p9)/black,(p0)/black,(q3)/blue,(3,0)/black}{
    \fill [\y]\x circle (2pt);}

 \path (1.35,-0.45) node[above left=-7pt]{$_{H_2\cap H_3}$};
 \path (1.5,-0.866) node[below right=-5pt]{$_{{H_1\cap H_3}}$};
 \path (4.823,0) node[below=-1pt]{$_{{Y_2\cap H_3}}$};
 \path (4.823,0.433) node[above right=-7pt]{$_{{Y_1\cap H_3}}$};

\end{scope}

\end{tikzpicture}
\par\end{center}
\par\end{centering}
\end{spacing}
\begin{spacing}{0.3}
\noindent \centering{}\caption{\label{fig:Vamos4}An Anti-Vámos Line Arrangement in the Plane $H_{3}$.}
\end{spacing}
\end{figure}
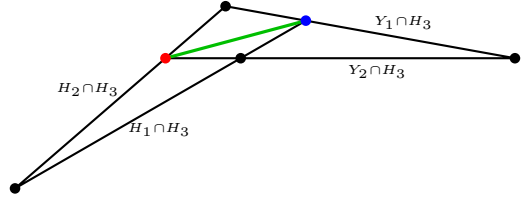

\subsection*{Conclusion}
\begin{thm}
\label{thm:Kantor-rk-4}Kantor's conjecture holds in rank $4$.
\end{thm}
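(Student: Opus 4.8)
The plan is to deduce the statement from Theorem~\ref{thm:Vamos} and Lemma~\ref{lem:Vamos}, after disposing of the cases of rank other than $4$ and of loops. Hypermodularity is only defined in rank $\ge 3$, and every matroid of rank $\le 2$ is modular, so those ranks are vacuous; in rank $3$ hypermodularity coincides with modularity, so a rank-$3$ hypermodular matroid is already modular, hence trivially strongly embeddable in a modular matroid. Thus I would concentrate on rank $4$.

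Let $M$ be a hypermodular matroid of rank $4$. First I would delete its loops, passing to $M' := M\backslash\overline{\emptyset}_{M}$, which is loopless, of rank $4$, and whose lattice of flats is isomorphic, as a ranked lattice, to that of $M$; in particular $M'$ is hypermodular, the modular defect of each pair of rank-$3$ flats being unchanged. By Lemma~\ref{lem:Vamos}, $M'$ has no V\'amos line arrangement, so Theorem~\ref{thm:Vamos} applies and realizes $M'$ as a restriction of a rank-$4$ modular loopless matroid $\tilde{M}_{0}$. I would then adjoin the loops of $M$ back to $\tilde{M}_{0}$ as loops, obtaining a rank-$4$ matroid $\tilde{M}$ that is still modular --- adding loops alters neither any flat of rank $\ge 1$ nor its modular defects, and any pair of flats one of which is $\overline{\emptyset}$ is automatically modular --- and that satisfies $M = \tilde{M}|_{E(M)}$. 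Since the lattice of flats of a restriction $N|_{X}$ is exactly $\{\,F\cap X : F\in\mathcal{L}(N)\,\}$, this realizes $M$ as strongly embedded in the modular matroid $\tilde{M}$ of the same rank, which is the assertion of Kantor's conjecture; see \cite{Bonin} for the equivalent reformulation in terms of same-rank restrictions.

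The one genuinely hard ingredient is Lemma~\ref{lem:Vamos} --- that hypermodularity by itself forbids a V\'amos line arrangement in rank $4$ --- which has already been proved above by erecting a ``tetrahedron'' configuration around a putative V\'amos line arrangement, contracting a rank-$1$ flat into a rank-$3$ plane, and extracting a contradiction from the anti-V\'amos arrangement that appears there. Everything left over for the present statement --- the vacuous low ranks, the loop reduction, and the translation between ``restriction of a same-rank modular matroid'' and ``strongly embeddable in a modular matroid'' --- is bookkeeping, so I expect no further obstacle.
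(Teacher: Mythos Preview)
Your proposal is correct and follows essentially the same route as the paper: pass to the loopless matroid $M\backslash\overline{\emptyset}_M$, invoke Lemma~\ref{lem:Vamos} and then Theorem~\ref{thm:Vamos} to embed it in a rank-$4$ modular loopless matroid, and finally re-adjoin the loops (the paper writes this last step as the direct sum $N\oplus\overline{\emptyset}_M$). Your discussion of ranks $\le 3$ is harmless but unnecessary, since the theorem as stated concerns only rank $4$.
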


\begin{proof}
Let $M$ be an arbitrary rank-$4$ hypermodular matroid. The loopless
matroid $M\backslash\overline{\emptyset}_{M}=M/\overline{\emptyset}_{M}$
is also a rank-$4$ hypermodular matroid, and has no Vámos line arrangement
by Lemma \ref{lem:Vamos}, and hence is a restriction of a rank-$4$
modular loopless matroid, say $N$, by Theorem \ref{thm:Vamos}. Then,
$N\oplus\overline{\emptyset}_{M}$ is a rank-$4$ modular matroid
whose restriction to $E(M)$ is $M$, and the proof is done.
\end{proof}
\begin{thm}
Both the sticky matroid conjecture and Kantor's conjecture hold.
\end{thm}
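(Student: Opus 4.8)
The plan is to deduce the combined statement from the paper's main theorem together with two previously established reductions, so that essentially no new argument is needed at this point. First I would invoke Theorem \ref{thm:Kantor-rk-4}, which establishes Kantor's conjecture in rank $4$: every rank-$4$ hypermodular matroid is a restriction of a modular matroid of the same rank, and in particular is strongly embeddable in a modular matroid. This is the substantive input, and it is the part that has already been proved above via Theorems \ref{thm:Vamos} and \ref{lem:Vamos}.

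Next I would apply the argument of Bachem, Kern, and Bonin \cite{BK88,Bonin}: the validity of Kantor's conjecture in rank $4$ implies that every sticky matroid is modular. Combined with the direction of Poljak and Turz\'ak \cite{Sticky} that every modular matroid is sticky, this gives the sticky matroid conjecture, namely that stickiness of a matroid is equivalent to modularity. Finally I would use the equivalence established by Hochst\"attler and Wilhelmi \cite{HW19}, by which the sticky matroid conjecture and the full Kantor's conjecture (in all ranks) are equivalent statements; since the sticky matroid conjecture now holds, Kantor's conjecture holds in every rank. The proof is thus the concatenation of these three inputs.

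The genuine difficulty --- proving Kantor's conjecture in rank $4$ --- lies entirely in Theorem \ref{thm:Kantor-rk-4} and has already been resolved, so there is no new obstacle to overcome here; the remaining steps are citations of known reductions rather than fresh arguments. The one point I would take care to check is that the hypotheses of the Bachem--Kern--Bonin reduction are met exactly as they require it: their reduction needs Kantor's conjecture precisely in rank $4$, which is the very form in which Theorem \ref{thm:Kantor-rk-4} is phrased. With that confirmed, the deduction is immediate, and the proof consists simply of invoking Theorem \ref{thm:Kantor-rk-4}, then \cite{BK88,Bonin} to obtain the sticky matroid conjecture, then \cite{HW19} to obtain the whole of Kantor's conjecture.
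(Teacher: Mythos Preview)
Your proposal is correct and follows exactly the same route as the paper: invoke Theorem~\ref{thm:Kantor-rk-4}, deduce the sticky matroid conjecture via the Bachem--Kern--Bonin reduction \cite{BK88,Bonin}, and then obtain the full Kantor conjecture via the Hochst\"attler--Wilhelmi equivalence \cite{HW19}. The paper's own proof is essentially the two-sentence version of what you wrote.
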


\begin{proof}
Theorem \ref{thm:Kantor-rk-4} implies that the sticky matroid conjecture
holds \cite{BK88,Bonin}. Now that the sticky matroid conjecture and
Kantor's conjecture both are equivalent \cite{HW19}, it follows that
the whole Kantor's conjecture holds.
\end{proof}

\end{document}